\documentclass[11pt]{amsart}
\usepackage{xcolor}

\definecolor{grn}{rgb}{0,0.6,0}
\definecolor{mrn}{rgb}{0.3,0,0}
\definecolor{blue}{rgb}{0,0,0.7}
\definecolor{Mygray}{rgb}{0.75,0.75,0.75}
\definecolor{auburn}{rgb}{0.43, 0.21, 0.1}
\definecolor{britishracinggreen}{rgb}{0.0, 0.26, 0.15}
\definecolor{taupe}{rgb}{0.28, 0.24, 0.2}
\usepackage{amsmath,amssymb}
\setlength{\textwidth}{16.cm}
\setlength{\oddsidemargin}{0cm}
\setlength{\evensidemargin}{0cm}
\setlength{\topmargin}{0cm}
\setlength{\headheight}{0.5cm}
\setlength{\headsep}{0.5cm}
\setlength{\topskip}{0cm}
\setlength{\textheight}{25cm}
\setlength{\footskip}{.5cm}

\newtheorem{theorem}{Theorem}[section]
\newtheorem{propn}[theorem]{Proposition}
\newtheorem{cor}[theorem]{Corollary}
\newtheorem{lemma}[theorem]{Lemma}

\newtheorem{rmk}[theorem]{Remark}

\newcommand{\Z}{\mathbb{Z}}
\newcommand{\Q}{\mathbb{Q}}

\newcommand{\Mod}[1]{\ (\mathrm{mod}\ #1)}
\newcommand{\p}{\mathfrak{p}}
\newcommand{\q}{\mathfrak{q}}

\usepackage{latexsym,amssymb}
\usepackage{amssymb}
\usepackage{graphicx}
\usepackage{ textcomp }
\usepackage[left=1.8cm,right=1.8cm,top=1.8cm,bottom=1.8cm]{geometry}
\usepackage{stmaryrd}
\usepackage{tikz}
\usepackage{tikzpagenodes} 
\usepackage{lipsum}
\usepackage{multicol}

\parskip=5pt

\begin{document}
\baselineskip=14.5pt
\title[ Iwasawa module with a bounded  quotient]{ Unramified Iwasawa module of $\Z_2$-extension of certain quadratic fields with a bounded quotient}

\author{H Laxmi and Anupam Saikia}
\address[H Laxmi and Anupam Saikia]{Department of Mathematics, Indian Institute of Technology Guwahati, Guwahati - 781039, Assam, India}

\email[H Laxmi]{hlaxmi@iitg.ac.in}

\email[Anupam Saikia]{a.saikia@iitg.ac.in}
\renewcommand{\thefootnote}{}

\footnote{2020 \emph{Mathematics Subject Classification}: Primary 11R29, Secondary 11R11, 11R23.}

\footnote{\emph{Key words and phrases}: 2-class groups, $\Z_2$-extension of quadratic fields, Iwasawa invariants, structure of Iwasawa module, quotient of Iwasawa module, Greenberg's conjecture.}

\footnote{\emph{We confirm that all the data are included in the article.}}

\renewcommand{\thefootnote}{\arabic{footnote}}
\setcounter{footnote}{0}

\begin{abstract}
We consider an infinite family of real quadratic fields $k$ where the discriminant has three distinct odd prime factors, and the prime 2 splits. We show that the unramified Iwasawa module $X(k_{\infty})$ associated with the $\Z_2$-extension of $k$ has a bounded quotient. Thus, we also verify Greenberg's conjecture on the vanishing of Iwasawa invariants for such fields and obtain a finer structure for  $X(k_{\infty})$.
\end{abstract}

\maketitle
\section{Introduction}
The growth of class groups in an extension of number fields has always been an intriguing subject in algebraic number theory. This theme was explored by K. Iwasawa (cf. \cite{iwasawa1}, \cite{iwasawa}) in an infinite tower of number fields. For a prime number $\ell$, and a number field $K$, he studied the $\ell$-class groups (the $\ell$-Sylow subgroup of the class group) of intermediate number fields present in a $\Z_{\ell}$-extension $K_{\infty}$ of $K$. Let $\Gamma = $ Gal($K_{\infty}/K$), which is topologically isomorphic to $\Z_{\ell}$ as a group. Then for each $n \geq 0$, there exists a unique $K_n \subseteq K_{\infty}$ such that Gal($K_n/K$) $ \cong \Z/{\ell}^n\Z$. Let $A(K_n)$ denote the $\ell$-class group of $K_n$. With respect to the norm maps between class groups in an extension, $\{A(K_n) : n\geq 0\}$ forms an inverse system. We denote the associated inverse limit by $X(K_{\infty})$. If $\Gamma_n = $ Gal($K_n/K$), then $A(K_n)$ is a module over $\Z_{\ell}[\Gamma_n]$, where the action of $\Gamma_n$ on $A(K_n)$ is naturally extended to $\Z_{\ell}[\Gamma_n]$. The {\it Iwasawa algebra} for this $\Z_{\ell}$-extension is defined as $\Lambda = \Z_{\ell}\llbracket \Gamma \rrbracket \cong \lim\limits_{\substack{ \longleftarrow \\ n}} \Z_{\ell}[\Gamma_n]$. It can be seen that (cf. \cite[Proposition 3.2.11]{sharifi} ) $X(K_{\infty})$ is a finitely generated, torsion $\Lambda$-module. By closely studying $X(K_{\infty})$, Iwasawa (cf. \cite[Theorem 11]{iwasawa1}, \cite{iwasawa}) proved that there exist constants $\mu(K_{\infty}/K), \lambda(K_{\infty}/K),$ and $\nu(K_{\infty}/K)$ (known as the {\it Iwasawa invariants}) such that $$e_n = \ell^n\cdot \mu(K_{\infty}/K) + n\cdot\lambda(K_{\infty}/K) + \nu(K_{\infty}/K)$$ for sufficiently large $n$, where $e_n$ denotes the highest power of $\ell$ dividing the order of $A(K_n)$. This equality is known as {\it Iwasawa's class number formula}. When $K$ is totally real, it has a unique $\Z_{\ell}$-extension which is the cyclotomic $\Z_{\ell}$-extension. In that case, we denote the associated Iwasawa invariants by $\mu, \lambda$, and $\nu$. In \cite{greenberg}, Greenberg conjectured that $\mu$ and $\lambda$ must vanish for any totally real field $K$. Ferrero and Washington (cf. \cite{ferrero-washington}) established that $\mu = 0$ for the cyclotomic $\Z_{\ell}$-extension of $K$, when $K$ is an abelian extension of $\Q$. Greenberg's conjecture is still open, with only partial progress made by taking specific prime numbers and families of number fields.

\smallskip

Iwasawa's class number formula allows one to investigate $X(K_{\infty})$ through $A(K_n)$, and vice-versa. Let $L(K_n)$ be the the $\ell$-Hilbert class field of $K_n$, i.e., the maximal unramified abelian $\ell$-extension of $K_n$. Then by Artin map, $A(K_n) \cong$ Gal($L(K_n)/K_n$). If $K_{\infty}^{ur}$ denotes the maximal unramified $\ell$-extension of $K_{\infty}$, then $X(K_{\infty})$ is the largest abelian quotient of Gal($K_{\infty}^{ur}/K_{\infty}$). A noteworthy aspect of a $\Z_{\ell}$-extension is that only the prime(s) above $\ell$ in $K$ can ramify in the extension $K_{\infty}/K$. Moreover, there exists $n_0$ such that any prime ideal above $\ell$ is totally ramified in $K_{\infty}/K_{n_0}$ (\cite[Lemma 13.3]{washington_book}). Let $D(K_n)$ be the subgroup of $A(K_n)$ generated by the ideal classes of prime ideals above $\ell$ in $K_n$. The quotient $A(K_n)/D(K_n)$, denoted by $A^{\prime}(K_n)$ corresponds to the maximal sub-extension $L^{\prime}(K_n)/K_n$ of $L(K_n)/K_n$, where all the primes above $\ell$ split completely. For any $m \geq n \geq 0$, the norm map is well-defined from $D(K_m)$ to $D(K_n)$, and hence, from $A^{\prime}(K_m)$ to $A^{\prime}(K_n)$. Thus $\{D(K_n) : n \geq 0\}$ and $\{A^{\prime}(K_n): n \geq 0\}$ form inverse systems with respect to norm maps. We denote the corresponding inverse limits as $D(K_{\infty})$ and $X^{\prime}(K_{\infty})$. Since $D(K_n)$ is finite for each $n$, it satisfies the Mittag-Lefler condition. Therefore, we can pass the exact sequence 
$$1 \longrightarrow D(K_n) \longrightarrow A(K_n) \longrightarrow A^{\prime}(K_n) \longrightarrow 1$$
to inverse limits and obtain $$1 \longrightarrow D(K_{\infty}) \longrightarrow X(K_{\infty}) \longrightarrow X^{\prime}(K_{\infty}) \longrightarrow 1.$$
In particular, we have $X^{\prime}(K_{\infty})$ as a quotient of $X(K_{\infty})$.

For $\ell = 2$, and any $n \geq 0$, let $\Q_n$ denote the the $n$-th layer of the $\Z_2$-extension of $\Q$. It is the maximal real subfield of $\Q(\zeta_{2^{n+2}})$, where $\zeta_{2^{n+2}}$ is a primitive $2^{n+2}$-the root of unity. For any quadratic field $K = \Q(\sqrt{d})$, the compositum $K\Q_n$ is the $n$-th layer in the $\Z_2$-extension of $K$. Then we have, $\Q_1 = \Q(\sqrt{2})$, $\Q_2 = \Q(\sqrt{ 2 + \sqrt{2}})$, $K_1 = \Q(\sqrt{d}, \sqrt{2})$, $K_2 = \Q(\sqrt{d},\sqrt{ 2 + \sqrt{2}})$, and so on. Because of the comprehensible structure of the number fields present in the extension, and a rich background of genus theory, the case of $\ell =2$ is one of the most explored cases for Greenberg's conjecture. One of the earliest works on verifying that $\lambda = 0$ for $\ell = 2$ was carried out by Ozaki and Taya (cf. \cite{ozaki-taya}). The fields were of the form $\Q(\sqrt{d})$, where $d$ has at most two distinct prime factors with some constraints. Thereafter, a number of mathematicians contributed towards Greenberg's conjecture for $\ell = 2$ by considering $\Q(\sqrt{d})$, where $d$ has at most two or three prime factors satisfying some additional conditions (cf. \cite{fukuda-komatsu}, \cite{kumakawa}, \cite{LS}, \cite{mouhib}, \cite{mouhib-mova}, \cite{mizusawa1}, \cite{Mizusawa ProcAMS}, \cite{Mizusawa CMB}, \cite{nishino}). In most of the fields mentioned there, it can be seen that there is only one prime above $2$. We shall make a slightly different attempt by focusing on an infinite family of real quadratic fields where $2$ splits. Throughout this article, we fix $k = \Q(\sqrt{pqr})$, where $p,q$, and $r$ are distinct odd primes satisfying the following conditions:
\begin{equation}\label{Cond 1}
p \equiv 9\Mod {16},\ q \equiv 3\Mod 8,\ r \equiv 3\Mod 8,\ \left(\dfrac{qr}{p}\right) = -1,\ \left(\dfrac{2}{p}\right)_{4} = -1 \Mod p.
\end{equation}

Here, for $p \equiv 1 \Mod {8}$, $\left(\dfrac{2}{p}\right)_{4}$ denotes the quartic power residue symbol with respect to $p$. The symbol is given by  $\left(\dfrac{2}{p}\right)_{4} \equiv 2^{\frac{p-1}{4}} \equiv \pm 1 \Mod p$. In this article, we shall simultaneously examine both $A(k_n)$ and $A^{\prime}(k_n)$, and obtain the precise structure of $X(K_{\infty})$. We thus prove the following results:

\begin{theorem}\label{X' is bdd}
Let $k = \Q(\sqrt{pqr})$ be a number field, where $p,q$, and $r$ are distinct odd primes satisfying $p \equiv 9\Mod {16},\ q \equiv 3\Mod 8,\ r \equiv 3\Mod 8,\ \left(\dfrac{qr}{p}\right) = -1,\ \left(\dfrac{2}{p}\right)_{4} = -1 \Mod p$. Then, the quotient Iwasawa module $X^{\prime}(k_{\infty})$ is isomorphic to $\Z/2\Z$.
\end{theorem}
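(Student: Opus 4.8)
The plan is to reduce the computation of the inverse limit $X^{\prime}(k_{\infty})=\varprojlim A^{\prime}(k_n)$ to a finite-level class group calculation, exploiting the fact that both primes of $k$ above $2$ are \emph{totally} ramified in $k_{\infty}/k$. Indeed, since $pqr\equiv 1\Mod 8$ the prime $2$ splits in $k$, say $(2)=\p\bar\p$, while $2$ is totally ramified in the cyclotomic $\Z_2$-extension $\Q_{\infty}/\Q$; hence $\p$ and $\bar\p$ are totally ramified already in $k_{\infty}/k$, so in the notation of the introduction one has $n_0=0$. This has two consequences I would use repeatedly: the norm maps $A^{\prime}(k_{n+1})\to A^{\prime}(k_n)$ are surjective for every $n\ge 0$, and a Fukuda-type stabilization criterion applies, namely that $|A^{\prime}(k_{n+1})|=|A^{\prime}(k_n)|$ for a single $n\ge 0$ forces $A^{\prime}(k_m)\cong A^{\prime}(k_n)$ for all $m\ge n$. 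Thus it suffices to show $|A^{\prime}(k_0)|=|A^{\prime}(k_1)|=2$: stabilization then gives $A^{\prime}(k_n)\cong \Z/2\Z$ for all $n$, and surjective maps between groups of equal order are isomorphisms, so $X^{\prime}(k_{\infty})\cong\Z/2\Z$.

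First I would pin down the base layer $A^{\prime}(k)=A(k)/D(k)$. Classical genus theory for $k=\Q(\sqrt{pqr})$, with the three ramified primes $p,q,r$, gives that the narrow $2$-class group has $2$-rank $t-1=2$; since $q\equiv r\equiv 3\Mod 4$ divide the discriminant, the fundamental unit has norm $+1$, which I would use to compare the narrow and wide class groups and to fix the $2$-rank of $A(k)$. Because $\p\bar\p=(2)$ is principal, $[\bar\p]=[\p]^{-1}$, so $D(k)=\langle[\p]\rangle$ is cyclic; hence the $2$-rank of $A^{\prime}(k)$ is at least $1$. The role of the hypotheses $\big(\tfrac{qr}{p}\big)=-1$ and $\big(\tfrac{2}{p}\big)_4=-1$, together with the refined congruences $p\equiv 9\Mod{16}$ and $q\equiv r\equiv 3\Mod 8$, is exactly to decide the splitting of the relevant primes in the genus field and to detect whether $[\p]$ is a square in $A(k)$; these reciprocity computations should collapse the $2$-rank of $A^{\prime}(k)$ to $1$ and force $|A^{\prime}(k)|=2$, i.e. $A^{\prime}(k)\cong\Z/2\Z$.

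The heart of the argument, and the step I expect to be the main obstacle, is the level-one computation $|A^{\prime}(k_1)|=2$ for $k_1=\Q(\sqrt{pqr},\sqrt2)$. Here I would apply Chevalley's ambiguous class number formula to a well-chosen quadratic extension ($k_1/k$ or $k_1/\Q_1$), counting ambiguous ideal classes in terms of ramification and of units that are local norms; the congruence and power-residue conditions reappear through the relevant local norm-residue symbols at the primes above $2,p,q,r$. The delicate point is that genus theory alone controls the $2$-rank but not the full order, so I would combine the ambiguous class number count with the surjectivity of the norm $A^{\prime}(k_1)\to A^{\prime}(k_0)$ to squeeze $|A^{\prime}(k_1)|$ between $2$ and the genus-theoretic upper bound, and then invoke the quartic residue hypothesis $\big(\tfrac{2}{p}\big)_4=-1$ to rule out the larger value. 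Once $|A^{\prime}(k_1)|=|A^{\prime}(k_0)|=2$ is established, the Fukuda stabilization and the inverse-limit argument of the first paragraph complete the proof that $X^{\prime}(k_{\infty})\cong\Z/2\Z$.
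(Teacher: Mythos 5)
Your overall architecture is the same as the paper's: establish $\#A^{\prime}(k_0)=\#A^{\prime}(k_1)=2$, then invoke the Mizusawa/Fukuda-type stabilization for $A^{\prime}$ (the paper's Theorem 2.2, valid here since both primes above $2$ are totally ramified in $k_{\infty}/k$) and pass to the inverse limit. That frame is fine, and your base-level conclusion $A^{\prime}(k)\cong\Z/2\Z$ is correct, though your sketch of it is slightly off: the wide $2$-rank of $A(k)$ is $1$ (not $2$; the genus formula for $k/\Q$ gives $2^{t-1}/2$ with $t=3$ because $-1$ fails to be a norm), the hypothesis $\left(\frac{qr}{p}\right)=-1$ forces $\#A(k)=2$ outright, and your claim that the $2$-rank of $A^{\prime}(k)$ is ``at least $1$'' does not follow from $D(k)$ being cyclic — a priori $D(k)$ could be all of $A(k)$. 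What actually happens is that $D(k)$ is trivial: since $p\equiv 1\Mod 8$ and $qr\equiv 1\Mod 8$, the prime $2$ splits completely in the genus field $\Q(\sqrt{p},\sqrt{qr})$, which here equals the $2$-Hilbert class field, so $A^{\prime}(k)=A(k)$.

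The genuine gap is the level-one step, which you correctly identify as the heart but do not carry out, and the tool you propose cannot carry it. Chevalley's formula for $k_1/\Q_1$ controls only the ambiguous classes, hence the $2$-rank of $A(k_1)$ (the paper gets $\mathrm{rank}_2\,A(k_1)=2$ this way, after Hilbert-symbol computations showing $-1$ is a norm in $k_1/\Q_1$ while $1+\sqrt{2}$ is not); it bounds neither $\#A(k_1)$ nor $\#A^{\prime}(k_1)$, so there is no ``genus-theoretic upper bound'' for your squeeze to close against — rank $2$ is perfectly consistent with $\#A^{\prime}(k_1)\geq 4$. The paper needs two further independent inputs. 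First, an order bound $\#A(k_1)\leq 4$: this comes from a Kumakawa-type inequality $\#A(k_1)\leq \#A(k)\cdot\#A(\Q(\sqrt{2pqr}))/2$, which in turn requires a separate argument (genus theory plus Nakayama applied to the ideal classes above $p,q,r$) that $A(\Q(\sqrt{2pqr}))\cong\Z/2\Z\oplus\Z/2\Z$; nothing in your proposal produces an order bound. Second, the decomposition of the primes $\ell_{11},\ell_{12}$ above $2$ in the $2$-Hilbert class field of $k_1$: the paper factors $p=p_1p_2$ into totally positive primes of $\Q_1$, invokes Nishino's criterion that $\Q_1$ admits a quadratic extension unramified outside $p_1$ if and only if $\left(\dfrac{2}{p}\right)_{4}\neq 1$, and combines this with the congruences $p_i\equiv\pm 3,\pm(1+2\sqrt{2})\Mod{4\sqrt{2}}$ to show $\langle\sqrt{2}\rangle$ is inert in $\Q_1(\sqrt{p_1})/\Q_1$; this identifies $L(k_1)=k_1(\sqrt{p_1},\sqrt{p})$ with the primes above $2$ splitting exactly in $k_1(\sqrt{p})$, whence $\#A^{\prime}(k_1)=2$. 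Note that $k_1(\sqrt{p_1})$ is not abelian over $\Q$, so no genus-field or reciprocity computation over $\Q$ of the kind you sketch can detect it: the quartic residue hypothesis enters through Nishino's existence criterion over $\Q_1$, not through local norm-residue symbols in $k_1/k$ or $k_1/\Q_1$. Without these two inputs your plan cannot rule out $\#A^{\prime}(k_1)>2$, and the stabilization step never gets off the ground.
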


\begin{cor}\label{lambda = 0}
The order of $A(k_n)$ is bounded as $n$ tends to infinity. Thus, the Iwasawa invariant $\lambda$ corresponding to the $\Z_2$-extension of $k$ vanishes.
\end{cor}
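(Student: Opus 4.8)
The plan is to deduce finiteness of the full module $X(k_{\infty})$ from Theorem \ref{X' is bdd}: once $X(k_{\infty})$ is finite, Iwasawa's class number formula forces $\mu=\lambda=0$, so $|A(k_n)|$ is eventually constant and in particular $\lambda=0$. Since $k/\Q$ is abelian, Ferrero--Washington (\cite{ferrero-washington}) already gives $\mu=0$, so everything reduces to $\lambda=0$, equivalently to the finiteness of $X(k_{\infty})$. By the exact sequence
$$1 \longrightarrow D(k_{\infty}) \longrightarrow X(k_{\infty}) \longrightarrow X^{\prime}(k_{\infty}) \longrightarrow 1$$
together with the Theorem (which makes $X^{\prime}(k_{\infty})\cong\Z/2\Z$ finite), it suffices to prove that $D(k_{\infty})$ is finite.

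First I would pin down the shape of $D(k_n)$. Because $2$ is totally ramified in $\Q_{\infty}/\Q$, each of the two primes $\p_1,\p_2$ of $k$ above $2$ is totally ramified in $k_{\infty}/k$, so there is a unique prime $\mathfrak{P}_i^{(n)}$ of $k_n$ above $\p_i$, and $D(k_n)$ is generated by $[\mathfrak{P}_1^{(n)}]$ and $[\mathfrak{P}_2^{(n)}]$. The $2$-class group of $\Q_n$ is trivial (again Ferrero--Washington for $\Q$), so the class of the prime of $\Q_n$ above $2$ has odd order; as that prime splits in $k_n/\Q_n$ as $\mathfrak{P}_1^{(n)}\mathfrak{P}_2^{(n)}$, the product $[\mathfrak{P}_1^{(n)}][\mathfrak{P}_2^{(n)}]$ has odd order and hence is trivial in the $2$-group $A(k_n)$. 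Thus $[\mathfrak{P}_2^{(n)}]=[\mathfrak{P}_1^{(n)}]^{-1}$ and $D(k_n)=\langle[\mathfrak{P}_1^{(n)}]\rangle$ is cyclic. Moreover $\Gamma$ fixes each $\p_i$ and hence its unique prime $\mathfrak{P}_i^{(n)}$ above it, so $\Gamma$ acts trivially on every $D(k_n)$ and therefore on $D(k_{\infty})=\varprojlim D(k_n)$. Consequently $D(k_{\infty})$ is a procyclic $\Z_2$-module with trivial $\Gamma$-action: either finite, or isomorphic to $\Z_2$. By additivity of the Iwasawa invariants along the displayed exact sequence and $\lambda(X^{\prime}(k_{\infty}))=0$, one gets $\lambda=\lambda(X(k_{\infty}))=\operatorname{rank}_{\Z_2}D(k_{\infty})\in\{0,1\}$.

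The crux is therefore to rule out $D(k_{\infty})\cong\Z_2$, i.e. to show that the order of $c_n:=[\mathfrak{P}_1^{(n)}]$ in $A(k_n)$ stays bounded. Since the norm maps $D(k_{n+1})\twoheadrightarrow D(k_n)$ send $c_{n+1}\mapsto c_n$, these orders are non-decreasing, so it is enough to show they stabilize. Here I would use the explicit determination of $A(k_n)$ in the first few layers --- obtained via genus theory and the congruence and power-residue conditions (\ref{Cond 1}) on $p,q,r$ that also drive the proof of Theorem \ref{X' is bdd} --- to compute $|D(k_n)|=|A(k_n)|/|A^{\prime}(k_n)|$ directly, and then invoke a stabilization criterion of Fukuda type (cf. \cite{fukuda-komatsu}): once $|A(k_{n+1})|=|A(k_n)|$ holds at a single layer, the orders remain constant thereafter, forcing $\operatorname{rank}_{\Z_2}D(k_{\infty})=0$. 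This arithmetic input, controlling the order of the class of the prime above $2$ throughout the tower, is the main obstacle; with it in hand $D(k_{\infty})$ is finite, hence so is $X(k_{\infty})$, and the corollary follows.
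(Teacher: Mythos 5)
Your reduction is the same as the paper's: since $A^{\prime}(k_n)\cong\Z/2\Z$ for all $n$ (Theorem \ref{X' is bdd}), boundedness of $\#A(k_n)$ is equivalent to boundedness of $\#D(k_n)$, and your structural observations (cyclicity of $D(k_n)$, the relation $[\mathfrak{P}_2^{(n)}]=[\mathfrak{P}_1^{(n)}]^{-1}$, and the fact that total ramification of the two primes of $k$ above $2$ forces $\mathrm{Gal}(k_n/k)$ to fix the classes generating $D(k_n)$) all appear in the paper, in this corollary's proof and in that of Corollary \ref{4-rank of A(kn)}. But your argument has a genuine gap at exactly the decisive point: ruling out $D(k_{\infty})\cong\Z_2$. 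You defer this to ``explicit determination of $A(k_n)$ in the first few layers'' followed by Fukuda's criterion (Theorem \ref{fukuda's result}(1)), but you never carry it out, and it cannot be carried out uniformly for this family. The paper computes $\#A(k_1)=4$ and $\mathrm{rank}_2 A(k_n)=2$, but it never establishes $\#A(k_{n+1})=\#A(k_n)$ at any fixed layer; indeed Corollary \ref{4-rank of A(kn)} deliberately leaves the terminal exponent $a_{n_0}$ (and the stabilization layer $n_0$) undetermined, so these quantities presumably vary over the triples $(p,q,r)$ satisfying Condition (\ref{Cond 1}), and no single genus-theoretic computation at a fixed level can verify Fukuda's hypothesis for the whole family. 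Your own bound $a_n\le n$ (from $\mathfrak{P}_1^{(n)2^n}$ being principal) is compatible with the order growing forever, which is precisely the $\Z_2$ case you must exclude.

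The paper closes this gap by a completely different, non-computational input that your proposal is missing: the observation that the $\tau_n$-invariance of $[\ell_{n1}],[\ell_{n2}]$ places $D(k_n)$ inside the group $B(k_n)$ of $\tau_n$-fixed classes, together with Greenberg's Proposition \ref{Bn bdd}, which bounds $\#B(k_n)$ as $n\to\infty$ for any totally real field satisfying Leopoldt's conjecture --- valid for $k$ by Brumer's theorem, since $k$ is abelian over $\Q$. That is the one idea your sketch lacks; everything before it in your write-up is sound (you even correctly note that trivial $\Gamma$-action makes $D(k_{\infty})$ either finite or $\Z_2\cong\Lambda/(T)$, which is $\Lambda$-torsion and so cannot be excluded by general module theory). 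A minor inaccuracy: the oddness of $h(\Q_n)$ is not a consequence of Ferrero--Washington (which gives $\mu=0$); it follows from Iwasawa's criterion for the totally ramified $\Z_2$-extension of $\Q$ (cf.\ \cite[Theorem 10.4]{washington_book}, which is what the paper cites), and the paper in fact uses the stronger fact that the prime of $\Q_n$ above $2$ is principal.
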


\begin{cor}\label{4-rank of A(kn)}
For each $n$, the group $A(k_n)$ is isomorphic to $\Z/2\Z \oplus \Z/2^{a_n}\Z$, where $0 \leq a_n \leq n$, and there exists a stage $n_0 \geq 1$ such that $1 \leq a_n = a_{n_0}$ for all $n \geq n_0$. Thus, the $4$-rank of $A(k_n)$ is at most $1$ for all $n \geq 0$. Moreover,  $X(k_{\infty})$ is isomorphic to $\Z/2\Z \oplus \Z/2^{a_{n_0}}\Z$.
\end{cor}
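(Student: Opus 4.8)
The plan is to feed Theorem~\ref{X' is bdd} and Corollary~\ref{lambda = 0} into the inverse-limit exact sequence $1 \to D(k_\infty) \to X(k_\infty) \to X'(k_\infty) \to 1$ from the introduction, and then to pin down the one missing piece of information --- the $2$-rank --- by genus theory. First I would record that $X(k_\infty) = \varprojlim A(k_n)$ is finite: by Corollary~\ref{lambda = 0} the orders $|A(k_n)|$ are bounded, and (since $\lambda = 0$) the class number formula gives $e_n = \nu$ for all large $n$, so $|A(k_n)|$ stabilizes. Because the primes above $2$ are totally ramified in $k_\infty/k_{n_0}$, the norm maps $A(k_m) \to A(k_n)$ are surjective for $m \ge n \ge n_0$; surjectivity together with stabilized orders forces these to be isomorphisms, so $A(k_n) \cong X(k_\infty)$ for all $n \ge n_0$. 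It therefore suffices to determine $X(k_\infty)$ as an abelian group and to treat the finitely many small levels separately.

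Next I would show $D(k_\infty)$ is cyclic. In $\Q_n$ the prime $2$ is totally ramified, $(2) = \pi_n^{2^{n}}$, so $[\pi_n]$ is $2$-power torsion; as the $2$-class group $A(\Q_n)$ is trivial (the cyclotomic $\Z_2$-extension of $\Q$ has $A(\Q_n) = 0$ for all $n$), the prime $\pi_n$ is principal. Since $2$ splits in $k$, $\pi_n$ splits in $k_n/\Q_n$ as $\pi_n \mathcal{O}_{k_n} = \p_n \overline{\p}_n$, whence $[\p_n] + [\overline{\p}_n] = 0$ in $A(k_n)$ and $D(k_n) = \langle [\p_n]\rangle$ is cyclic. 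The norm map sends $[\p_{n+1}] \mapsto [\p_n]$ (total ramification, residue degree $1$), so $D(k_\infty) = \varprojlim D(k_n)$ is procyclic, and being a subgroup of the finite group $X(k_\infty)$ it is $\cong \Z/2^{b}\Z$ for some $b \ge 0$. Inserting $X'(k_\infty) \cong \Z/2\Z$ (Theorem~\ref{X' is bdd}) into the exact sequence yields $0 \to \Z/2^{b}\Z \to X(k_\infty) \to \Z/2\Z \to 0$, so $|X(k_\infty)| = 2^{b+1}$ and $X(k_\infty)$ is generated by at most two elements.

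The decisive step is to show that the $2$-rank of $X(k_\infty)$ --- equivalently of $A(k_n)$ for $n \ge n_0$ --- equals $2$, i.e.\ that $X(k_\infty)$ is not cyclic. I would establish this through the ambiguous class number (genus) formula for the quadratic extension $k_n/\Q_n$, whose ramified primes lie above $p,q,r$: the congruences \eqref{Cond 1} are precisely what govern the decomposition of $p,q,r$ in $\Q_n$ and hence the genus number, forcing $2$-rank exactly $2$. Granting this, $X(k_\infty)$ cannot be cyclic, and an abelian group of order $2^{b+1}$ of $2$-rank $2$ that contains a cyclic subgroup of order $2^{b}$ must be $\Z/2\Z \oplus \Z/2^{b}\Z$ (in particular $b \ge 1$). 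Writing $a_{n_0} := b$ gives $X(k_\infty) \cong \Z/2\Z \oplus \Z/2^{a_{n_0}}\Z$ and $A(k_n) \cong \Z/2\Z \oplus \Z/2^{a_{n_0}}\Z$ with $1 \le a_n = a_{n_0}$ for all $n \ge n_0$.

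For an arbitrary level $n$, the same genus computation bounds the $2$-rank of $A(k_n)$ by $2$ while $D(k_n)$ stays cyclic, and combined with the structure of $A'(k_n) = A(k_n)/D(k_n)$ this forces $A(k_n) \cong \Z/2\Z \oplus \Z/2^{a_n}\Z$; the bound $a_n \le n$ follows from $\p_n^{2^{n}} = \p\,\mathcal{O}_{k_n}$ together with the order of $[\p]$ in $A(k)$, and the $4$-rank is then automatically at most $1$, since only the factor $\Z/2^{a_n}\Z$ can contribute a cyclic summand of order $\ge 4$. I expect the genus-theoretic determination that the $2$-rank is exactly $2$ (ruling out the cyclic alternative for $X(k_\infty)$) to be the main obstacle, as the remaining structure is then purely formal; some additional care is needed to make the bound $a_n \le n$ uniform in $n$ rather than merely to establish the eventual stabilization.
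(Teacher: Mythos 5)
Your proposal lands on the paper's own skeleton: the exact sequence $1 \to D(k_n) \to A(k_n) \to A^{\prime}(k_n) \to 1$, cyclicity of $D(k_n)$, $2$-rank equal to $2$, stabilization of $a_n$ from $\lambda = 0$, and the bound $a_n \leq n$ from principality of powers of the primes above $2$. Two of your choices genuinely diverge, both defensibly. First, you prove cyclicity of $D(k_n)$ directly: the prime of $\Q_n$ above $2$ is principal (odd class number of $\Q_n$), so its two factors in $k_n$ satisfy $[\ell_{n2}] = [\ell_{n1}]^{-1}$ and $D(k_n) = \langle [\ell_{n1}] \rangle$. The paper instead derives $\mathrm{rank}_2\, D(k_n) = 1$ from the claimed additivity $\mathrm{rank}_2 A = \mathrm{rank}_2 D + \mathrm{rank}_2 A^{\prime}$, which is not valid for arbitrary finite $2$-groups (take $A = \Z/2\Z \oplus \Z/4\Z$ and $D = \Z/2\Z \oplus 2\Z/4\Z$: the quotient is $\Z/2\Z$, yet $2 \neq 2 + 1$); your argument is the more robust one, and indeed the paper itself invokes the same principality relation later for the order bound, so its conclusion is unharmed. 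Second, the step you flag as the main obstacle --- that $\mathrm{rank}_2\, A(k_n) = 2$ for all $n \geq 1$ --- is not open at this point in the paper: it is exactly Lemma \ref{rank A(Kn)=2}, proved before the corollary, and you should cite it rather than redo an ambiguous-class-number computation for $k_n/\Q_n$ at every level. A literal per-level genus computation would require controlling the unit index $\left[ E(\Q_n) : E(\Q_n)\cap N_{k_n/\Q_n}k_n^{\times} \right]$ where $E(\Q_n)$ has rank $2^n - 1$; the paper avoids precisely this by computing at level $2$ and invoking stability (Theorem \ref{fukuda's result}, part (2)).

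One small repair is needed in your final step: for $a_n \leq n$ you appeal to ``the order of $[\p]$ in $A(k)$'' without fixing it. You need the classes of $\ell_{01}$ and $\ell_{02}$ to be \emph{trivial} in $A(k)$, which follows from Remark \ref{A'(k0) =2} (since $\#A^{\prime}(k) = \#A(k) = 2$ forces $D(k_0)$ to be trivial); if that class merely had order $2$, your argument would only yield $a_n \leq n+1$. With that citation in place, the rest of your reasoning is sound and essentially coincides with the paper's: the extension $0 \to \Z/2^{b}\Z \to X(k_\infty) \to \Z/2\Z \to 0$ with $2$-rank $2$ forces $X(k_\infty) \cong \Z/2\Z \oplus \Z/2^{b}\Z$ with $b \geq 1$, norm surjectivity plus stabilized orders identifies $A(k_n)$ with $X(k_\infty)$ for $n \geq n_0$, and the $4$-rank bound is immediate from the shape $\Z/2\Z \oplus \Z/2^{a_n}\Z$.
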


\begin{rmk}
Let $F = \Q(\sqrt{2pqr})$, where the primes $p,q$, and $r$ satisfy Condition (\ref{Cond 1}). The $\Z_2$-extension of $F$ is same as that of $k$, barring the base field. Therefore, the Iwasawa $\lambda$-invariant corresponding to $X(F_{\infty})$ is also equal to $0$, with same the $2$-class group as that of $k_n$ at each level $n \geq 1$.
\end{rmk}
 
\section{Preliminaries}

As we are interested in studying $2$-class groups in the $\Z_2$-extension of $k$, we shall be dealing with their rank and order at multiple levels. For a prime $\ell$, and the $\ell$-class group $A(K_n)$, the $\ell$-rank of $A(K_n)$ is the dimension of $A(K_n)/\ell A(K_n)$ as a vector space over $\Z/\ell \Z$. Thus, it is the number of copies of $\Z/\ell \Z$ present in the cyclic decomposition of $A(K_n)$. We shall use ${\rm{rank}}_{\ell}A(K_{n})$ to denote the $\ell$-rank of $A(K_n)$. We now state a result by Fukuda that gives information about the stability of rank and order of $\ell$-class groups in $\Z_{\ell}$-extension for any prime $\ell$.

\begin{theorem}\cite[Theorem 1]{fukuda}\label{fukuda's result}
Let $\ell$ be a prime number. Let $K$ be a number field and let $K_{\infty}/K$ be a $\Z_{\ell}$-extension of $K$.  Let $A(K_n)$ denote the $\ell$-class subgroup of the $n$-th layer $K_n$ in the extension $K_\infty/K$. Let $n_0 \geq 0$ be an integer such that any prime of $K_{\infty}$ which is ramified in $K_{\infty}/K$ is totally ramified in $K_{\infty}/K_{n_0}$. Then the following hold. 
\begin{enumerate}
\item If there exists an integer $n \geq n_0$ such that $\#A(K_{n+1}) = \#A(K_n)$, then $\#A(K_m) = \#A(K_n)$ for all $m \geq n$. In particular, both the Iwasawa invariants $\mu(K_\infty/K)$ and $\lambda(K_\infty/K)$ vanish. 
  
  \smallskip
  
\item If there exists an integer $n \geq n_0$ such that ${\rm{rank}}_{\ell}A(K_{n+1}) = {\rm{rank}}_{\ell}A(K_{n})$, then ${\rm{rank}}_{\ell}A(K_{m}) = {\rm{rank}}_{\ell}A(K_{n})$ for all $m \geq n$. In particular, the Iwasawa invariant $\mu(K_\infty/K)$ vanishes.
\end{enumerate}
\end{theorem}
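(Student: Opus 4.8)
The plan is to deduce both parts from the module structure of the Iwasawa module $X = X(K_\infty) = \varprojlim A(K_n)$, regarded as a finitely generated torsion module over $\Lambda = \Z_\ell[[T]]$, where $T = \gamma_0 - 1$ for a fixed topological generator $\gamma_0$ of $\Gamma$. First I would reduce to the case $n_0 = 0$ by replacing $K$ with $K_{n_0}$, so that every ramified prime is totally ramified in $K_\infty/K$; this is harmless since all assertions concern levels $n \geq n_0$ and $K_\infty/K_{n_0}$ is again a $\Z_\ell$-extension. The geometric input I would record at the outset is that the norm maps $A(K_{n+1}) \to A(K_n)$ are \emph{surjective} for $n \geq n_0$: since $L(K_n)/K_n$ is unramified while $K_{n+1}/K_n$ is totally ramified at a prime above $\ell$, one has $L(K_n) \cap K_{n+1} = K_n$, hence $\mathrm{Gal}(L(K_n)K_{n+1}/K_{n+1}) \cong A(K_n)$ and $L(K_n)K_{n+1} \subseteq L(K_{n+1})$. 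Consequently both $\#A(K_n)$ and $\mathrm{rank}_\ell A(K_n)$ are non-decreasing in $n \geq n_0$; the real content is that a \emph{single} instance of non-growth forces eventual constancy, and the engine for that propagation will be Nakayama's lemma.

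The bridge I would invoke is the standard Iwasawa isomorphism $A(K_n) \cong X/\mathcal{Y}_n$ with $\mathcal{Y}_n = \omega_n X + Z$ and $\omega_n = (1+T)^{\ell^n} - 1$, valid for $n \geq n_0$, where $Z \subseteq X$ is a fixed submodule independent of $n$ encoding the inertia of the primes above $\ell$; note $\mathcal Y_{n+1} \subseteq \mathcal Y_n$, recovering the surjections above. For part (1), suppose $\#A(K_{n+1}) = \#A(K_n)$ for some $n \geq n_0$. As these groups are finite and $\mathcal Y_{n+1} \subseteq \mathcal Y_n$, equality of orders forces $\mathcal Y_n = \mathcal Y_{n+1}$, i.e. $\omega_n X + Z = \omega_{n+1} X + Z$. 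Passing to $\overline X = X/Z$ gives $\omega_n \overline X = \omega_{n+1}\overline X$, so $N := \omega_n \overline X$ satisfies $N = \nu_{n+1,n} N$, where $\nu_{n+1,n} = \omega_{n+1}/\omega_n = \sum_{j=0}^{\ell-1}(1+\omega_n)^j \equiv \ell \pmod{\omega_n}$ lies in the maximal ideal $\mathfrak{m} = (\ell,T)$ of $\Lambda$. Topological Nakayama applied to the compact finitely generated $\Lambda$-module $N$ yields $N = 0$, i.e. $\omega_n X \subseteq Z$; since $\omega_n \mid \omega_m$ for $m \geq n$, this gives $\mathcal Y_m = Z$ and $A(K_m) \cong X/Z$ for all $m \geq n$. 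Thus $\#A(K_m)$ is constant, the exponent $e_m = \ell^m \mu + m\lambda + \nu$ is bounded, and $\mu = \lambda = 0$.

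For part (2) I would reduce modulo $\ell$. Writing $\overline X = X/\ell X$, a finitely generated module over the local ring $\mathbb{F}_\ell[[T]]$, and using $\omega_n \equiv T^{\ell^n} \pmod \ell$, one obtains $\mathrm{rank}_\ell A(K_n) = \dim_{\mathbb{F}_\ell} \overline X'/T^{\ell^n}\overline X'$ with $\overline X' = \overline X/\overline Z$. If the ranks at $n$ and $n+1$ coincide, then the nested submodules $T^{\ell^{n+1}}\overline X' \subseteq T^{\ell^n}\overline X'$ have equal finite codimension and hence are equal; Nakayama over $\mathbb{F}_\ell[[T]]$ then forces $T^{\ell^n}\overline X' = 0$, so the rank is constant for all $m \geq n$ and $\overline X'$ is finite, which gives $\mu = 0$.

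The hard part will be controlling the ramification correction term $Z$. One must verify both that the isomorphism $A(K_n) \cong X/(\omega_n X + Z)$ holds with a single $Z$ independent of $n$ throughout $n \geq n_0$ — this is precisely where the totally-ramified hypothesis and the choice of $n_0$ enter — and that $Z$ is a genuine $\Lambda$-submodule, so that $\overline X = X/Z$ inherits a $\Lambda$-structure and the Nakayama steps are legitimate. Establishing this module description cleanly, equivalently computing $\ker\!\big(X \twoheadrightarrow A(K_n)\big)$ through the inertia groups of the primes above $\ell$, is the technical heart; once it is in place, both parts reduce to the short Nakayama arguments above.
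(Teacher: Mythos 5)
The paper offers no proof of this statement at all---it is quoted verbatim from Fukuda's paper---so your attempt has to be measured against Fukuda's actual argument, and there is a genuine gap at precisely the point you defer to as ``the technical heart''. The bridge you assert, $A(K_n)\cong X/(\omega_n X+Z)$ with a \emph{single} submodule $Z$ independent of $n$, is false whenever more than one prime ramifies in $K_\infty/K_{n_0}$ --- which is exactly the relevant case in this paper, since $2$ splits in $k$ and both primes above $2$ ramify in $k_\infty/k$. The correct descent statement (Washington, Lemma 13.18; it is what Fukuda's Lemmas 1--2 rest on) is that for $n\geq n_0$ the kernel $Y_n:=\ker\left(X\twoheadrightarrow A(K_n)\right)$ satisfies $Y_n=\nu_{n,n_0}Y_{n_0}$, equivalently $Y_n=\omega_n X+\nu_{n,n_0}Z$, where $Z$ is the $\Lambda$-submodule generated by the elements $a_i=\sigma_i\sigma_1^{-1}$ comparing inertia generators at the several ramified primes. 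The factor $\nu_{n,n_0}$ on the correction term is forced by the group theory: in the relevant Galois group one has $\sigma_i^{\ell^{n-n_0}}=\nu_{n,n_0}(a_i)\,\sigma_1^{\ell^{n-n_0}}$, so the inertia contribution to the level-$n$ kernel is $\nu_{n,n_0}a_i$, not $a_i$. Since $\omega_n X+\nu_{n,n_0}Z\subseteq \omega_n X+Z$, your formula would present the true $A(K_n)$ as having a quotient it generally does not have; it is correct only in degenerate situations such as a single totally ramified prime, where $Z=0$. So the claim you promise to verify at the end cannot be verified as stated, and the proof as written does not close.

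The encouraging part is that the error is localized: your Nakayama engine is exactly Fukuda's, and it runs even more smoothly on the correct description. From $\#A(K_{n+1})=\#A(K_n)$ and $Y_{n+1}\subseteq Y_n$ you get $Y_n=Y_{n+1}=\nu_{n+1,n}Y_n$ with $\nu_{n+1,n}\in\mathfrak{m}=(\ell,T)$, and Nakayama applied to the finitely generated module $Y_n$ (legitimate since $\Lambda$ is Noetherian and $X$ is finitely generated) kills $Y_n$ outright; hence $Y_m=\nu_{m,n}Y_n=0$ and $A(K_m)\cong X$ for all $m\geq n$ --- note the stable value is $X$ itself, not your $X/Z$, a symptom of the mis-identified kernel. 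For part (2), rank equality gives $Y_n+\ell X=Y_{n+1}+\ell X=\nu_{n+1,n}(Y_n+\ell X)+\ell X$, so $W:=(Y_n+\ell X)/\ell X$ satisfies $W=\nu_{n+1,n}W$ over $\Lambda/\ell\Lambda\cong\mathbb{F}_\ell[[T]]$, where $\nu_{n+1,n}\equiv T^{(\ell-1)\ell^n}$ lies in the maximal ideal; Nakayama yields $Y_n\subseteq \ell X$, hence $Y_m+\ell X=\ell X$ and ${\rm{rank}}_{\ell}A(K_m)=\dim_{\mathbb{F}_\ell}X/\ell X$ for all $m\geq n$, and finiteness of this common rank group makes $X$ finitely generated over $\Z_\ell$, giving $\mu=0$. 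Your preliminary reductions (replacing $K$ by $K_{n_0}$, surjectivity of the norm maps via $L(K_n)\cap K_{n+1}=K_n$, and deducing $\mu=\lambda=0$ from boundedness of $e_m=\ell^m\mu+m\lambda+\nu$) are all sound; only the fixed-$Z$ structural lemma needs to be replaced by the relation $Y_n=\nu_{n,n_0}Y_{n_0}$.
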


Following similar lines of proof, Mizusawa proved an analogous result for the quotient groups $A^{\prime}(K_n)$. We state it as follows:

\begin{theorem}\cite[Proposition 3]{Mizusawa ProcAMS}\label{A' stability}
Let $K$ be a number field, and suppose that the cyclotomic $\Z_{\ell}$-extension $K_{\infty}/K$ is totally ramified at any prime lying over $\ell$. Then, the following hold.
\begin{enumerate}
\item If $\#A^{\prime}(K_{1}) = \#A^{\prime}(K)$, then $\#A^{\prime}(K_n) = \#A^{\prime}(K)$ for all $n \geq 0$. In particular, $\#A^{\prime}(K_n)$ is isomorphic to $\#A^{\prime}(K)$ for all $n \geq 0$. 
  
\smallskip
  
\item If ${\rm{rank}}_{\ell}A^{\prime}(K_{1}) = {\rm{rank}}_{\ell}A^{\prime}(K)$, then ${\rm{rank}}_{\ell}A^{\prime}(K_n) = {\rm{rank}}_{\ell}A^{\prime}(K)$ for all $n \geq 0$. 
\end{enumerate}
\end{theorem}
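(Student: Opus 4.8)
The plan is to reduce Theorem \ref{A' stability} to a co-descent computation for the $\Lambda$-module $X' := X^{\prime}(K_{\infty})$, and then to mimic the Nakayama-type argument underlying Fukuda's Theorem \ref{fukuda's result}. Fix a topological generator $\gamma$ of $\Gamma = \mathrm{Gal}(K_{\infty}/K)$ and identify $\Lambda = \Z_{\ell}[[T]]$ via $\gamma \mapsto 1+T$, so that $\Gamma_n = \mathrm{Gal}(K_{\infty}/K_n)$ is generated by $\gamma^{\ell^n}$ and $\gamma^{\ell^n}-1$ becomes $\omega_n := (1+T)^{\ell^n}-1$. The crux is the identification
\[ A^{\prime}(K_n) \cong X'/\omega_n X' = (X')_{\Gamma_n} \qquad (n \geq 0), \]
compatible with the norm maps, which thereby become the natural projections $X'/\omega_{n+1}X' \twoheadrightarrow X'/\omega_n X'$. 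Writing $L^{\prime}_{\infty} = \bigcup_n L^{\prime}(K_n)$ and $X' = \mathrm{Gal}(L^{\prime}_{\infty}/K_{\infty})$, the reason this clean coinvariant description holds, with no correction term, is that every prime over $\ell$ is totally ramified in $K_{\infty}/K$ yet splits completely in $L^{\prime}_{\infty}/K_{\infty}$; hence in $\mathrm{Gal}(L^{\prime}_{\infty}/K)$ the decomposition group of such a prime coincides with its inertia group, maps isomorphically onto $\Gamma$, and meets $X'$ trivially. Consequently the decomposition-of-$\ell$ corrections that obstruct a naive coinvariant description of the full $A(K_n)$ are exactly what has been killed in passing from $X(K_{\infty})$ to $X'$. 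I expect establishing this lemma to be the main obstacle; it is precisely where the hypothesis of total ramification from the base ($n_0 = 0$) and the defining splitting property of $A^{\prime}$ are used.

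Granting the lemma, part (1) follows from Nakayama's lemma. If $\#A^{\prime}(K_1) = \#A^{\prime}(K)$, the surjection $X'/\omega_1 X' \twoheadrightarrow X'/\omega_0 X'$ (note $\omega_0 = T$ divides $\omega_1$) is an isomorphism, so $\omega_0 X' = \omega_1 X'$, i.e. $T X' = \nu_{1,0}\, T X'$ with $\nu_{1,0} = \omega_1/\omega_0 = ((1+T)^{\ell}-1)/T$. Since $\nu_{1,0} \equiv \ell \pmod T$ lies in the maximal ideal $\mathfrak{m} = (\ell, T)$ of the local ring $\Lambda$, the finitely generated module $Z := T X'$ satisfies $Z = \mathfrak{m}Z$, whence $Z = 0$. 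Thus $T X' = 0$, and since $\omega_n \in (T)$ for every $n$ we get $\omega_n X' = 0$, so $A^{\prime}(K_n) \cong X'$ for all $n$; in particular all have the same order (and all are isomorphic to $A^{\prime}(K) \cong X'$).

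For part (2), I would reduce modulo $\ell$. Put $\bar X = X'/\ell X'$, a finitely generated module over the discrete valuation ring $\bar\Lambda = \mathbb{F}_{\ell}[[T]]$, and note $\mathrm{rank}_{\ell} A^{\prime}(K_n) = \dim_{\mathbb{F}_{\ell}} \bar X/\omega_n \bar X$. Because $(1+T)^{\ell^n} \equiv 1 + T^{\ell^n} \pmod \ell$, we have $\omega_n \equiv T^{\ell^n}$ in $\bar\Lambda$, so $\mathrm{rank}_{\ell} A^{\prime}(K_n) = \dim_{\mathbb{F}_{\ell}} \bar X/T^{\ell^n}\bar X$. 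Decomposing $\bar X \cong \bar\Lambda^{a} \oplus \bigoplus_j \bar\Lambda/(T^{e_j})$ over the PID $\bar\Lambda$, the hypothesis $\mathrm{rank}_{\ell} A^{\prime}(K_1) = \mathrm{rank}_{\ell} A^{\prime}(K)$ reads $a\ell + \sum_j \min(e_j,\ell) = a + \#\{j\}$; as $\ell \geq 2$ and each $\min(e_j,\ell) \geq 1$, this forces $a = 0$ and every $e_j = 1$, that is $T\bar X = 0$. Then $\dim_{\mathbb{F}_{\ell}} \bar X/T^{\ell^n}\bar X = \dim_{\mathbb{F}_{\ell}} \bar X$ is independent of $n$, giving $\mathrm{rank}_{\ell} A^{\prime}(K_n) = \mathrm{rank}_{\ell} A^{\prime}(K)$ for all $n$. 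Apart from the co-descent lemma, everything reduces to Nakayama's lemma together with the structure theory of modules over the discrete valuation ring $\mathbb{F}_{\ell}[[T]]$.
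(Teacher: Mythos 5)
Your overall architecture --- realize $A^{\prime}(K_n)$ by co-descent from $X^{\prime}$ and then run Fukuda-style Nakayama arguments --- is exactly the route behind the cited result (the paper offers no proof of this theorem; it quotes Mizusawa's Proposition 3, which ``follows similar lines'' to Fukuda's Theorem \ref{fukuda's result}). But your key lemma is stated too strongly, and your justification does not prove it. Knowing that each prime $w \mid \ell$ of $L^{\prime}_{\infty}$ satisfies $D_w = I_w$, $D_w \cap X^{\prime} = 1$ and $D_w \cong \Gamma$ only shows that each $D_w$ is a complement to $X^{\prime}$ in $G^{\prime} = \mathrm{Gal}(L^{\prime}_{\infty}/K)$; it does not show that the complements attached to the $s$ \emph{distinct} primes $v_1,\dots,v_s$ of $K$ above $\ell$ agree up to conjugacy, which is what ``no correction term'' actually requires. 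Writing $\sigma^{\prime}_i$ for the unique element of $D_{w_i}$ mapping to $\gamma$ and $\sigma^{\prime}_i = \bar{a}_i\,\sigma^{\prime}_1$ with $\bar{a}_i \in X^{\prime}$, the correct statement (the exact analogue of Washington's Lemma 13.15, which is what Fukuda and Mizusawa use) is $A^{\prime}(K_n) \cong X^{\prime}/\nu_{n,0}Y^{\prime}_0$ with $Y^{\prime}_0 = TX^{\prime} + \langle \bar{a}_2,\dots,\bar{a}_s\rangle$ and $\nu_{n,0} = \omega_n/\omega_0$; one has $\bar{a}_i \in TX^{\prime}$ if and only if $D_{w_i}$ and $D_{w_1}$ are conjugate in $G^{\prime}$, and decomposition groups of \emph{different} primes of the base field have no reason to be conjugate. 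Note that $s=2$ is precisely the situation of this paper, since $2$ splits in $k$. A decisive obstruction to your clean formula being a theorem: total ramification holds from some finite level $n_0$ in every cyclotomic $\Z_{\ell}$-extension, so your lemma applied with base $K_{n_0}$ would yield $X^{\prime}/\omega_{n_0}X^{\prime} \cong A^{\prime}(K_{n_0})$ finite, hence $X^{\prime}/TX^{\prime}$ finite --- i.e.\ the Gross--Kuz'min conjecture for all number fields and all $\ell$, which is open.

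The good news is that both of your Nakayama arguments survive the correction nearly verbatim, which is the content of Mizusawa's proof. For part (1): if $\#A^{\prime}(K_1) = \#A^{\prime}(K)$, then $Y^{\prime}_1 = \nu_{1,0}Y^{\prime}_0 \subseteq Y^{\prime}_0$ have equal finite index in $X^{\prime}$, so $Y^{\prime}_0 = \nu_{1,0}Y^{\prime}_0 \subseteq \mathfrak{m}Y^{\prime}_0$, and Nakayama gives $Y^{\prime}_0 = 0$ (not merely $TX^{\prime}=0$), whence $A^{\prime}(K_n) \cong X^{\prime}$ for all $n$. For part (2), however, your DVR-decomposition computation must be replaced, because the formula $\mathrm{rank}_{\ell}A^{\prime}(K_n) = \dim_{\mathbb{F}_{\ell}}\bar{X}/T^{\ell^n}\bar{X}$ rests on the false clean lemma. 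Instead: $\mathrm{rank}_{\ell}A^{\prime}(K_n) = \dim_{\mathbb{F}_{\ell}} X^{\prime}/(\ell X^{\prime} + Y^{\prime}_n)$, so the hypothesis gives $\ell X^{\prime} + Y^{\prime}_1 = \ell X^{\prime} + Y^{\prime}_0$; reducing mod $\ell X^{\prime}$ and using $\nu_{1,0} \equiv T^{\ell-1} \Mod{\ell}$, the image of $Y^{\prime}_0$ in $X^{\prime}/\ell X^{\prime}$ equals $T^{\ell-1}$ times itself, hence vanishes by Nakayama over $\mathbb{F}_{\ell}[[T]]$; thus $Y^{\prime}_n = \nu_{n,0}Y^{\prime}_0 \subseteq \ell X^{\prime}$ for all $n$ and $\mathrm{rank}_{\ell}A^{\prime}(K_n) = \dim_{\mathbb{F}_{\ell}}X^{\prime}/\ell X^{\prime}$ is constant. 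So the gap is confined to the co-descent lemma you yourself flagged as the main obstacle; with $\omega_nX^{\prime}$ replaced by $\nu_{n,0}Y^{\prime}_0$ throughout, your proof becomes the standard one.
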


For a cyclic extension $L/K$, the genus formula is a useful identity that correlates the order of a subgroup of $A(L)$ with that of $A(K)$. The other quantities that are involved are the number of prime ideals ramified in $L/K$, the degree of extension of $L/K$ and certain multiplicative subgroup of units in the ring of integers of $K$ and $L$. We now state the formula in case of a quadratic extension.

\smallskip

\begin{theorem}(\cite{chevalley}, \cite[Section 2.2]{Mizusawa ProcAMS})
Let $L/K$ be a quadratic extension with Galois group $G$. Let $t$ be the number of prime ideals ramified in $L/K$. Let $A(L)^{G}$ be the subgroup of $A(L)$ consisting of the ideal classes that are fixed by the action of $G$ on $A(L)$. Let $N_{L/K}$ denote the norm map from $L$ to $K$, and $E(L)$ and $E(K)$ be the unit groups of the ring of integers of $L$ and $K$, respectively. Then we have,
\begin{align*}
\#A(L)^{G} &= \#A(K) \times \dfrac{2^{t-1}}{\left[ E(K):E(K)\cap N_{L/K}L^{\times} \right]}.
\end{align*}
\end{theorem}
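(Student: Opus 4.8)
The statement is Chevalley's ambiguous class number formula for a cyclic (here quadratic) extension, and the plan is to prove it by taking $G$-cohomology of the two short exact sequences that build the class group and then running an index computation. I would prove the identity for the full ideal class group; since $[L:K]=2$ every correction factor below is a power of $2$, so the identity descends to the $2$-Sylow subgroups, which is the form used in the sequel. Write $I_L$ for the group of fractional ideals of $L$ and $P_L\subset I_L$ for the principal ones, so that $1\to P_L\to I_L\to A(L)\to 1$ and $1\to E(L)\to L^{\times}\to P_L\to 1$. Because $I_L$ is a permutation $G$-module on the primes of $L$, one has $H^1(G,I_L)=0$; taking $G$-invariants of the first sequence therefore gives
\[
0\longrightarrow P_L^{G}\longrightarrow I_L^{G}\longrightarrow A(L)^{G}\longrightarrow H^1(G,P_L)\longrightarrow 0 .
\]
I would then compare this with the corresponding sequence for $K$ (ideals extended from $K$ are $G$-fixed) via the snake lemma, using that $I_K\to I_L^{G}$ and $P_K\to P_L^{G}$ are injective, to reduce $\#A(L)^{G}$ to $\#A(K)$ times the two cokernel indices together with $\#H^1(G,P_L)$; the capitulation kernel cancels, leaving
\[
\#A(L)^{G}=\#A(K)\cdot\frac{[\,I_L^{G}:I_K\,]}{[\,P_L^{G}:P_K\,]}\cdot \#H^1(G,P_L).
\]

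The first cokernel is purely local: an invariant ideal is a product of the extended primes of $K$ taken with their ramification, so $[\,I_L^{G}:I_K\,]=\prod_{v}e_v=2^{t_f}$, where $t_f$ is the number of finite ramified primes (each with $e_v=2$). For the principal ideals, the unit sequence together with Hilbert~90 ($H^1(G,L^{\times})=0$) yields an exact sequence $0\to E(K)\to K^{\times}\to P_L^{G}\to H^1(G,E(L))\to 0$, whence $P_L^{G}/P_K\cong H^1(G,E(L))$ and $[\,P_L^{G}:P_K\,]=\#H^1(G,E(L))$. Assembling these gives
\[
\#A(L)^{G}=\#A(K)\cdot 2^{t_f}\cdot\frac{\#H^1(G,P_L)}{\#H^1(G,E(L))}.
\]

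Next I would pin down $H^1(G,P_L)$. Feeding the unit sequence into the long exact Tate-cohomology sequence (with $G$ cyclic of order $2$, so cohomology is $2$-periodic), using Hilbert~90 again and the class-field-theoretic identification $\widehat H^{0}(G,L^{\times})=K^{\times}/N_{L/K}L^{\times}$, one obtains
\[
H^1(G,P_L)\cong\ker\!\left(E(K)/N_{L/K}E(L)\longrightarrow K^{\times}/N_{L/K}L^{\times}\right)=\bigl(E(K)\cap N_{L/K}L^{\times}\bigr)\big/N_{L/K}E(L).
\]
Combining this with $\widehat H^{0}(G,E(L))=E(K)/N_{L/K}E(L)$ and the definition of the Herbrand quotient $h(E(L))=\#\widehat H^{0}(G,E(L))/\#H^1(G,E(L))$, the ratio $\#H^1(G,P_L)/\#H^1(G,E(L))$ collapses exactly to $h(E(L))\big/[\,E(K):E(K)\cap N_{L/K}L^{\times}\,]$. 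It then remains to evaluate the unit Herbrand quotient: by Dirichlet's theorem $E(L)\otimes\Q$ is, as a $\Q[G]$-module, the augmentation submodule of $\bigoplus_{v\mid\infty}\Q[G/G_v]$ (sum over archimedean places $v$ of $K$, with $G_v$ the decomposition group), so Shapiro's lemma gives $h(E(L))=\bigl(\prod_{v\mid\infty}\#G_v\bigr)/[L:K]=2^{\,t_{\infty}-1}$, where $t_{\infty}$ counts the ramified archimedean places. Substituting and setting $t=t_f+t_{\infty}$ (so $t=t_f$ in the totally real situation, where $t_{\infty}=0$) yields the asserted identity.

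The main obstacle I expect lies in the two middle steps: showing cleanly that $\#H^1(G,P_L)/\#H^1(G,E(L))$ is precisely $h(E(L))/[\,E(K):E(K)\cap N_{L/K}L^{\times}\,]$ — in particular forcing the norm of the full group $L^{\times}$ (rather than of $E(L)$) to appear, which is exactly where class field theory enters through $K^{\times}/N_{L/K}L^{\times}$ — and correctly evaluating the unit Herbrand quotient so that the archimedean contribution merges with $2^{t_f}$ into the single factor $2^{t-1}$. The index bookkeeping in the snake-lemma reduction, while routine, must also be handled carefully because $I_L^{G}$ and $P_L^{G}$ are infinite, so one works throughout with the finite quotients by $I_K$ and $P_K$.
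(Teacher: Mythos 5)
The paper does not actually prove this statement: it is quoted as a classical theorem, with the proof delegated to the citations (Chevalley's thesis and Section 2.2 of Mizusawa's paper), so there is no internal proof to compare against. Your argument is a correct and essentially complete rendition of the standard cohomological proof of Chevalley's ambiguous class number formula, and all four pillars are deployed correctly: $H^1(G,I_L)=0$ because $I_L$ is a direct sum of modules induced from decomposition groups; the reduction $\#A(L)^{G}=\#A(K)\cdot[I_L^{G}:I_K]\cdot\#H^1(G,P_L)/[P_L^{G}:P_K]$; the identifications $[I_L^{G}:I_K]=\prod_v e_v$ and $[P_L^{G}:P_K]=\#H^1(G,E(L))$ via Hilbert 90; and the Herbrand quotient $h(E(L))=\bigl(\prod_{v\mid\infty}\#G_v\bigr)/[L:K]$, which correctly produces the exponent $t-1$ once archimedean ramification is folded in. Three small remarks. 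First, the identification $\widehat H^{0}(G,L^{\times})=K^{\times}/N_{L/K}L^{\times}$ is simply the definition of Tate cohomology in degree zero, not an input from class field theory; the formula is in fact pre-class-field-theoretic, and your worry about ``where CFT enters'' is unfounded --- nothing beyond Hilbert 90 and Dirichlet's unit theorem is needed. Second, the snake-lemma bookkeeping you flag as delicate can be bypassed entirely: since $P_K\subseteq P_L^{G}\subseteq I_L^{G}$ and $P_K\subseteq I_K\subseteq I_L^{G}$, index multiplicativity along the two chains gives $[I_L^{G}:P_L^{G}]\,[P_L^{G}:P_K]=[I_L^{G}:I_K]\,[I_K:P_K]$ directly, with no capitulation kernel to cancel and no infinite groups to worry about. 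Third, your archimedean care actually repairs an imprecision in the paper's statement: as written, $t$ counts only ramified prime ideals, which is valid only when no real place becomes complex; this is harmless in the paper because every quadratic extension to which the formula is applied there is totally real over totally real, and your decomposition $t=t_f+t_{\infty}$ makes the needed hypothesis explicit. Finally, your opening reduction from the full class group to $2$-Sylow subgroups (the correction factor being a power of $2$, so the odd parts match) is exactly the right way to justify the paper's use of the formula for $A(\cdot)$ rather than the full class group.
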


If $A(K)$ is trivial, then the norm map from $A(L)$ to $A(K)$ is trivial and $G$ acts as $[\mathfrak{a}] \mapsto [\mathfrak{a}]^{-1}$ on $A(L)$. In that case, $A(L)^{G}$ is isomorphic to $A(L)/2A(L)$, and we obtain that $\#A(L)^{G} = 2^{\  {\rm{rank_2}} A(L) }$. For a quadratic extension $L/\Q$, the field corresponding to $A(L)^G$ is known as the genus field of $L$, denoted by $L_G$. It is the maximal abelian extension of $\Q$, contained in the 2-Hilbert class field of $L$. Suppose $D_L = 2^ep_1\cdots p_r$ ($e = 0,2$ or $3$) is the discriminant of $L$, where $p_i$'s are distinct odd primes. Let $L_G^{+}$ be the field $\Q(\sqrt{D_L}, \sqrt{p_1^*},\ldots,\sqrt{p_r^*})$, where $p_i^* = p_i$ or $-p_i$ in case of $p_i \equiv 1 \Mod{4}$ and $p_i \equiv 3 \Mod{4}$ respectively. If $L$ is real, then $L_G$ is the maximal real subfield of $L_G^{+}$, otherwise, it is equal to $L_G^{+}$ (for details, cf. \cite[Chapter 6]{janusz}).

\smallskip

From the genus formula, it can be seen that the unit group and norm of elements in an extension $L/K$ play a key role in the structure of $A(L)$. Therefore, it is vital to study these ingredients, and we do so by using the {\it Hilbert symbol}. In a finite abelian extension $L/K$, for $\alpha \in K$ and a prime ideal $\mathfrak{P}$, the {\it norm residue symbol} of $\alpha$ with respect to $\mathfrak{P}$ is an element in Gal($L/K$). Suppose $\mathfrak{P}$ is an unramified prime ideal in $L/K$, and $\langle \alpha \rangle = \mathfrak{P}^a$ for some integer $a \geq 0$ in the completion $K_{\mathfrak{P}}$. Then the norm residue symbol of $\alpha$ with respect to $\mathfrak{P}$ is defined as
$$\left(\dfrac{\alpha, L/K}{\mathfrak{P}}\right) = \left(\dfrac{L/K}{\mathfrak{P}}\right)^a,$$
where $\left(\dfrac{L/K}{\mathfrak{P}}\right)$ is the Frobenius element of $\mathfrak{P}$ corresponding to $L/K$. This symbol is also defined for ramified prime ideals via ideles in class field theory (for further details, see \cite{nancy_childress}, \cite{RoyCFT}, \cite{janusz}). Suppose $K$ contains the $m$-th roots of unity, and $L = K(\beta^{1/m})$ for some $\beta \in K^{\times}\setminus (K^{\times})^m$. Then for any $\alpha \in K$ and a prime ideal $\mathfrak{P}$ in $L/K$, the action of the norm residue symbol on $\beta^{1/m}$ is given by $\left(\dfrac{\alpha, L/K}{\mathfrak{P}}\right)\cdot \beta^{1/m} = \zeta_m\cdot\beta^{1/m}$, where $\zeta_m$ is a $m$-th root of unity. This root of unity $\zeta_m$ is denoted by $\left( \dfrac{\alpha, \beta}{\mathfrak{P}} \right)$, and is known as the Hilbert symbol. One of the most significant characteristics of the Hilbert symbol is that an element $\alpha \in K$ is a norm in the extension $K(\beta^{1/m})/K$ if and only if $\left( \dfrac{\alpha, \beta}{\mathfrak{P}} \right)=1$ for all the prime ideals $\mathfrak{P}$ of $K$. The Hilbert symbol possesses a number of interesting properties. The ones that we require in this work are the following (cf. \cite[Lemma 10.6]{RoyCFT}):
\begin{multicols}{2}
\begin{enumerate}
\item $\left( \dfrac{\alpha, \beta\gamma}{\mathfrak{P}} \right)= \left( \dfrac{\alpha, \beta}{\mathfrak{P}} \right)\left( \dfrac{\alpha, \gamma}{\mathfrak{P}} \right)$
\item $\left( \dfrac{\alpha, \beta}{\mathfrak{P}} \right) = \left( \dfrac{\beta, \alpha}{\mathfrak{P}} \right)^{-1}$.
\end{enumerate}
\end{multicols}
\noindent As we shall be only dealing with real number fields, the only roots of unity belonging to $\Q_n$ and $k_n$ are $-1$ and $1$ for any $n$. For that reason, the Hilbert symbols that arise here will have value $-1$ or $1$.

\smallskip

For a totally real number field $K$, suppose that the $\Z_{\ell}$-extension $K_{\infty}/K$ is topologically generated by $\gamma$. For each $n$, $\gamma$ induces a map on $K_n$ via restriction, which we denote by $\tau_n$. The extension $K_n/K$ is cyclic and its Galois group is generated by $\tau_n$. Let $B(K_n)$ be the group
$\{ [\mathfrak{a}] \in A(K_n) : [\mathfrak{a}]^{\tau_n}= [\mathfrak{a}]\}$. We observe that if we apply the genus formula on the cyclic extension $K_n/K$, then the formula yields the order of $B(K_n)$. We shall use this group at a later stage. We now state an important result proved by Greenberg regarding the order of $B(K_n)$.

\begin{propn}\cite[Proposition 1]{greenberg}\label{Bn bdd}
Let $K$ be a totally real number field for which Leopoldt's conjecture holds true. Then, the order of $B(K_n)$ remains bounded as $n$ tends to infinity.
\end{propn}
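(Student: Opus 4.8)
The plan is to realise $B(K_n)=A(K_n)^{\langle\tau_n\rangle}$ as the group of ambiguous ideal classes of the cyclic extension $K_n/K$, compute its order through the ambiguous class number formula, and then show that the resulting expression stays bounded as $n\to\infty$. Applying the degree-$\ell^n$ analogue of the genus formula quoted above to the $\ell$-class group in $K_n/K$ yields
\begin{equation*}
\#B(K_n)=\#A(K)\cdot\frac{\prod_{v}e_v(K_n/K)}{\ell^{\,n}\,\bigl[E(K):E(K)\cap N_{K_n/K}K_n^{\times}\bigr]},
\end{equation*}
where $v$ runs over the places of $K$ and $e_v$ is the ramification index in $K_n/K$. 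As $\#A(K)$ is a fixed constant, I am reduced to weighing the ramification numerator against the degree $\ell^n$ and the unit--norm index.

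First I would dispose of the ramification factor. In a $\Z_\ell$-extension only the primes above $\ell$ ramify, and since $K$ and every $K_n$ are totally real the archimedean places contribute $e_v=1$. Let $\p_1,\dots,\p_t$ be the primes of $K$ above $\ell$ that ramify; by hypothesis each is totally ramified in $K_\infty/K_{n_0}$, so for $n\geq n_0$ we have $e_{\p_i}(K_n/K)=c_i\,\ell^{\,n}$ with $c_i$ depending only on the layer $n_0$. Hence $\prod_v e_v(K_n/K)/\ell^{\,n}=c\,\ell^{(t-1)n}$ for a fixed constant $c$, and the statement reduces to showing that $[E(K):E(K)\cap N_{K_n/K}K_n^{\times}]$ grows like $\ell^{(t-1)n}$. (If $t=1$ the numerator is already bounded and Leopoldt's conjecture plays no role.)

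Next I would analyse the unit--norm index. Because $K_n/K$ is cyclic the Hasse norm theorem equates global norms with everywhere-local norms; at the archimedean places and at the finite places away from $\ell$ the extension is trivial or unramified, so every unit is automatically a local norm there. Thus the index equals the order of the image of $E(K)$ under the product of local Artin maps into $\prod_{i=1}^{t}\mathrm{Gal}(K_{n,\mathfrak{P}_i}/K_{\p_i})$, each factor being cyclic of order $c_i\ell^{\,n}$. Passing to the limit gives a diagonal map of $E(K)$ into $M_\infty:=\prod_{i=1}^{t}\mathrm{Gal}(K_{\infty,\mathfrak{P}_i}/K_{\p_i})\cong\Z_\ell^{\,t}$, and the index in question grows like $\ell^{\rho n}$, where $\rho$ is the $\Z_\ell$-rank of the closure of this image. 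The global reciprocity law forces the product of the local symbols of any unit to be trivial, i.e. the image lies in the hyperplane $\sum_i x_i=0$ under the identification of each $\mathrm{Gal}(K_{\infty,\mathfrak{P}_i}/K_{\p_i})$ with $\Gamma$; thus $\rho\leq t-1$.

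The crux is the reverse inequality $\rho\geq t-1$, and this is exactly where Leopoldt's conjecture enters. Leopoldt for $K$ at $\ell$ asserts that $E(K)\otimes\Z_\ell$ injects into $\prod_{\p\mid\ell}U_{\p}$ with image of full rank $[K:\Q]-1$; the plan is to transport this $\ell$-adic independence through the local Artin maps to conclude that the image of $E(K)$ in $M_\infty$ is of finite index in the reciprocity hyperplane, so $\rho=t-1$. Granting this, $[E(K):E(K)\cap N_{K_n/K}K_n^{\times}]$ and $\ell^{(t-1)n}$ differ only by a bounded factor, the two exponential contributions in the genus formula cancel, and $\#B(K_n)$ remains bounded. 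I expect this final step — passing from nonvanishing of the $\ell$-adic regulator to full rank of the local images of the global units — to be the main obstacle, since it is precisely here that the growth can fail to be matched when the Leopoldt defect is positive.
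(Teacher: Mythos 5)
The paper offers no proof of this proposition: it is quoted verbatim from Greenberg \cite[Proposition 1]{greenberg}, and the only paper-internal content is the observation (via Brumer's theorem) that Leopoldt's conjecture holds for the real quadratic field $k$, so the hypothesis is satisfied. Your sketch is essentially Greenberg's original argument step for step --- Chevalley's ambiguous class number formula for the cyclic layer $K_n/K$, the Hasse norm theorem to localize the unit--norm index at the ramified primes above $\ell$, the idelic product formula to bound the rank $\rho$ of the closure of the image of $E(K)$ in $M_\infty \cong \Z_\ell^{\,t}$ by $t-1$, and Leopoldt's conjecture for the reverse inequality. The one step you flag as the main obstacle does close in the standard way: in the cyclotomic $\Z_\ell$-extension every $\p \mid \ell$ ramifies, the kernel of the local Artin map $U_{\p_i} \to \mathrm{Gal}(K_{\infty,\mathfrak{P}_i}/K_{\p_i}) \cong \Z_\ell$ has $\Z_\ell$-corank $1$ in $U_{\p_i}$, so the kernel of $\prod_i U_{\p_i} \to \Z_\ell^{\,t}$ has rank $[K:\Q]-t$; since Leopoldt makes the closure of the diagonal image of $E(K)$ in $\prod_i U_{\p_i}$ of rank $[K:\Q]-1$, its image in $M_\infty$ has rank at least $\bigl([K:\Q]-1\bigr)-\bigl([K:\Q]-t\bigr) = t-1$, whence $\rho = t-1$ and the factor $\ell^{(t-1)n}$ from ramification is cancelled by the unit index up to a bounded constant, exactly as you claim.
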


Greenberg has expressed various formulations of Leopoldt's conjecture in \cite[Page 265]{greenberg}. In \cite{Brumer}, Brumer validated Leopoldt's conjecture for abelian extensions of $\Q$. As the field $k$ under our consideration is a real quadratic extension of $\Q$, Leopoldt's conjecture holds true for $k$, and hence, the requirement in Proposition \ref{Bn bdd} is fulfilled by $k$.

\section{Structure of $A(k_n)$}
We consider $k = \Q(\sqrt{pqr})$ such that $p$, $q$ and $r$ satisfy Condition (\ref{Cond 1}). From the congruence conditions on $p,q$, and $r$, the discriminant $D_k$ of $k$ is equal to $pqr$, and the prime above $2$ is split in $k_n/\Q_n$ for all $n \geq 0$. As $D_k$ has a prime factor congruent to 3 modulo 8, $-1$ is not a norm in the extension $k/\Q$. Since $h(\Q) = 1$, the genus formula implies that  ${\rm{rank}}_{2}A(k)=1$. Additionally, the genus field $k_G$ of $k$ is given by $k(\sqrt{p}) = \Q(\sqrt{p},\sqrt{qr})$. We now recall a lemma for a larger family of number fields $K$ containing the family $k$ under our consideration. This will allow us to conclude that the order of $A(k)$ is $2$. 

\begin{lemma}\cite[Lemma 3.1]{LS}\label{A_k = 2}
Let $K = \Q(\sqrt{pqr})$ such that $p \equiv 1 \Mod{4}$ and $q,\; r \equiv 3 \Mod{4}$. Then, $\#A(K) = 2$ if and only if $-1 \in \left\lbrace \left(\dfrac{q}{p}\right), \left(\dfrac{r}{p}\right) \right\rbrace $.
\end{lemma}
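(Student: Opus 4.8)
The plan is to recast the claim as a vanishing statement for the $4$-rank of $A(K)$. First I would feed the quadratic extension $K/\Q$ into the genus formula: since $h(\Q)=1$ and the discriminant $D_K=pqr$ carries the factor $q\equiv 3\Mod 4$, the element $-1$ is not a norm from $K$ (the negative Pell equation $x^2-pqr\,y^2=-1$ has no solution), so $[E(\Q):E(\Q)\cap N_{K/\Q}K^{\times}]=2$, and with $t=3$ ramified primes the formula gives $\#A(K)^{G}=2^{t-1}/2=2$. Because $A(\Q)$ is trivial we have $A(K)^{G}\cong A(K)/2A(K)$, so ${\rm rank}_2 A(K)=1$; thus $A(K)$ is cyclic of $2$-power order and its genus field is $K_G=\Q(\sqrt{p},\sqrt{qr})$. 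Consequently $\#A(K)=2$ if and only if $K_G$ is already the full $2$-Hilbert class field of $K$, i.e. if and only if the $4$-rank of $A(K)$ is zero.

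Next I would compute this $4$-rank by R\'edei--Reichardt theory. Writing $D_K=pqr$ as a product of prime discriminants $d_1=p,\ d_2=-q,\ d_3=-r$ (note $-q,-r\equiv 1\Mod 4$), I form the $3\times 3$ R\'edei matrix $R$ over $\Z/2\Z$, whose off-diagonal entry in position $(i,j)$ records whether $d_i$ is a square modulo $p_j=|d_j|$ and whose diagonal is fixed by the requirement that every row sums to $0$; the narrow $4$-rank is then $(t-1)-{\rm rank}(R)=2-{\rm rank}(R)$. Evaluating the entries by quadratic reciprocity together with $p\equiv 1,\ q\equiv r\equiv 3\Mod 4$ gives $\left(\dfrac{p}{q}\right)=\left(\dfrac{q}{p}\right)$, $\left(\dfrac{-q}{p}\right)=\left(\dfrac{q}{p}\right)$, $\left(\dfrac{-q}{r}\right)=-\left(\dfrac{q}{r}\right)$ and $\left(\dfrac{r}{q}\right)=-\left(\dfrac{q}{r}\right)$. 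Letting $A,B,C\in\Z/2\Z$ denote the classes of $\left(\dfrac{q}{p}\right),\left(\dfrac{r}{p}\right),\left(\dfrac{q}{r}\right)$ (with $+1\mapsto 0$, $-1\mapsto 1$), the matrix becomes
\[
R=\begin{pmatrix} A+B & A & B \\ A & A+C+1 & C+1 \\ B & C & B+C \end{pmatrix}.
\]

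A short case analysis on $(A,B)$ then finishes the argument: when $A=B=0$ (both $q,r$ are quadratic residues mod $p$) all nonzero rows are proportional to $(0,1,1)$, so ${\rm rank}(R)=1$ and the narrow $4$-rank is $1$; in every other case two rows are visibly independent, ${\rm rank}(R)=2$, and the narrow $4$-rank is $0$. To descend to the ordinary class group I would use that, since $-1$ is not a norm, the fundamental unit has norm $+1$; hence the $2$-part $A^{+}(K)$ of the narrow class group satisfies ${\rm rank}_2 A^{+}(K)={\rm rank}_2 A(K)+1$ and the natural surjection $A^{+}(K)\to A(K)$ has kernel of order $2$, so the $4$-rank of $A(K)$ vanishes exactly when that of $A^{+}(K)$ does. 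Therefore $\#A(K)=2$ precisely when at least one of $q,r$ is a non-residue modulo $p$, i.e. when $-1\in\left\{\left(\dfrac{q}{p}\right),\left(\dfrac{r}{p}\right)\right\}$. The main obstacle is this last reduction --- keeping the narrow-versus-ordinary factor of $2$ straight while extracting the clean Legendre criterion from ${\rm rank}(R)$ --- together with the sign-chasing in the reciprocity evaluations forced by the two primes $\equiv 3\Mod 4$; alternatively one can bypass the R\'edei matrix entirely and test directly whether the generating ramified ideal class is a square by means of the Hilbert symbols $\left(\dfrac{p,\,qr}{\mathfrak{l}}\right)$, which fits the norm-residue framework set up earlier.
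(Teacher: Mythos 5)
The paper itself never proves this lemma: it is imported verbatim from the authors' earlier work \cite[Lemma 3.1]{LS}, so there is no in-paper argument to measure yours against. Judged on its own merits, your proof is correct and complete in outline. The opening step (genus formula with $t=3$, $h(\Q)=1$ and $-1\notin N_{K/\Q}K^{\times}$ because of the local obstruction at $q\equiv 3\Mod 4$, giving ${\rm rank}_2A(K)=1$, so that $\#A(K)=2$ if and only if the $4$-rank vanishes) is exactly the computation this paper performs for its special $k$ at the start of Section 3. Your R\'edei--Reichardt computation with the prime discriminants $p,-q,-r$, followed by the narrow-to-ordinary descent ($N\varepsilon=+1$ forces $\#A^{+}(K)=2\,\#A(K)$, so with ${\rm rank}_2A^{+}(K)=2$ one has $\#A(K)=2$ iff $A^{+}(K)\cong \Z/2\Z\oplus\Z/2\Z$ iff the narrow $4$-rank is $0$ --- this order count is in fact a cleaner way to phrase your descent), then correctly extracts the criterion $-1\in\left\{\left(\dfrac{q}{p}\right),\left(\dfrac{r}{p}\right)\right\}$. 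This is the same genus-theoretic circle of ideas as the paper's toolkit (genus fields, Hilbert symbols, ambiguous classes), but the R\'edei matrix packages the reciprocity case analysis more systematically than the explicit-unramified-extension style the paper favours elsewhere.

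One slip you should repair: your matrix conventions are mismatched. With off-diagonal entries $a_{ij}=[d_i \bmod p_j]$ (rows indexed by the genus characters $\chi_{d_i}$, columns by the ramified classes $[\mathfrak{p}_j]$), it is the \emph{columns}, not the rows, that automatically sum to zero, and the correct diagonal is given by the complementary discriminant, $(-1)^{a_{jj}}=\left(\dfrac{D_K/d_j}{p_j}\right)$; here this gives $a_{22}=A+C$ and $a_{33}=B+C+1$ rather than your $A+C+1$ and $B+C$. (Rows sum to zero only under the transposed convention $a_{ij}=[d_j\bmod p_i]$.) Fortunately the discrepancy sits entirely on the diagonal, and running your case analysis on the correctly normalized matrix
\[
M=\begin{pmatrix} A+B & A & B \\ A & A+C & C+1 \\ B & C & B+C+1 \end{pmatrix}
\]
gives the same rank as your $R$ for all eight values of $(A,B,C)$: rank $1$ precisely when $A=B=0$, rank $2$ otherwise. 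So your conclusion, and hence the lemma, stands, but as written the normalization error could have produced a wrong $4$-rank in another family.
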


The quadratic subfields of a bi-quadratic extension govern the properties of the class group of the bigger field to a large extent. In case of a quadratic field $K = \Q(\sqrt{d})$, the first layer in its $\Z_2$-extension is $K_1 = \Q(\sqrt{2}, \sqrt{d})$. Thus, before moving to the first layer, it is imperative to inspect $\Q(\sqrt{2d})$. Let $F = \Q(\sqrt{2pqr})$, where the primes $p,q$, and $r$ satisfy Condition \ref{Cond 1}. We now prove the following lemma which enables us to determine the structure of $A(F)$.

\begin{lemma}\label{A(F) = (2,2)}
Let $F = \Q(\sqrt{2pqr})$ with $p \equiv 1\Mod{8}$, $q,r\equiv 3\Mod{8}$, and $\left(\dfrac{qr}{p}\right) = -1$. Then, $A(F)$ is isomorphic to $\Z/2\Z \oplus \Z/2\Z$.
\end{lemma}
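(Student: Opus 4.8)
The plan is to determine $A(F)$ in two stages: first compute its $2$-rank by genus theory, then show that its $4$-rank vanishes, so that $A(F)$ must be elementary abelian of order $4$.

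First I would record that, since $2pqr \equiv 2 \Mod 4$, the discriminant of $F$ is $D_F = 8pqr$, so exactly the four primes $2, p, q, r$ ramify in $F/\Q$. Applying the genus formula of the excerpt to $F/\Q$, where $A(\Q)$ is trivial and $E(\Q) = \{\pm 1\}$, gives $2^{\,{\rm rank}_2 A(F)} = 2^{t-1}/\left[ E(\Q) : E(\Q)\cap N_{F/\Q}F^{\times}\right]$ with $t = 4$. This index equals $2$ precisely when $-1$ is not a norm from $F$, which I would verify through the Hilbert symbol: at the prime $q$ one has $\left(\dfrac{-1}{q}\right) = -1$ since $q \equiv 3 \Mod 4$, so $-1$ fails to be a local norm at $q$ and hence is not a global norm. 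This yields ${\rm rank}_2 A(F) = t - 1 - 1 = 2$. The same observation shows that the fundamental unit of $F$ has norm $+1$ (a unit of norm $-1$ would make $-1$ a norm), a fact I will need when passing between the narrow and the ordinary class group.

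Next I would identify the genus field. Writing the prime discriminants of $D_F$ as $8, p, -q, -r$ (using $p \equiv 1 \Mod 4$ and $q,r \equiv 3 \Mod 4$), the genus field of $F$ is the maximal real subfield $F_G = \Q(\sqrt 2, \sqrt p, \sqrt{qr})$, of degree $4$ over $F$, consistent with the $2$-rank just found. The decisive remaining task is to show that $F_G$ is already the full $2$-Hilbert class field of $F$, i.e.\ that the $4$-rank of $A(F)$ is $0$. For this I would invoke R\'edei's criterion for the $4$-rank of the narrow class group: one runs over the nontrivial factorizations $D_F = D_1 D_2$ into two discriminants assembled from $\{8, p, -q, -r\}$ and checks that the R\'edei symbol $[D_1, D_2]$ is $-1$ in every case, equivalently that the associated $4 \times 4$ R\'edei matrix over $\Z/2\Z$ has rank $3$; since ${\rm rank}_2$ of the narrow group is $t-1 = 3$, this forces the narrow $4$-rank to be $0$. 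Combined with the norm-$+1$ fact, descending from the narrow to the ordinary class group then gives $4$-rank $0$ for $A(F)$ as well.

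The heart of the argument, and the step I expect to be the main obstacle, is exactly this rank computation, where the arithmetic hypotheses enter. The off-diagonal entries are governed by the symbols $\left(\dfrac{2}{p}\right) = 1$ and $\left(\dfrac 2 q\right) = \left(\dfrac 2 r\right) = -1$ (from $p \equiv 1 \Mod 8$ and $q, r \equiv 3 \Mod 8$), together with $\left(\dfrac q p\right)\left(\dfrac r p\right) = \left(\dfrac{qr}{p}\right) = -1$ by quadratic reciprocity and the reciprocity relation linking $\left(\dfrac q r\right)$ and $\left(\dfrac r q\right)$. A direct reduction then shows that the rows satisfy a single linear dependency and are otherwise independent, so the rank is $3$ and the $4$-rank is $0$. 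The condition $\left(\dfrac{qr}{p}\right) = -1$ is essential here: were this symbol $+1$, the rank would drop and a $\Z/4\Z$-component could appear. Finally, assembling ${\rm rank}_2 A(F) = 2$ with $4$-rank $0$ forces $A(F) \cong \Z/2\Z \oplus \Z/2\Z$, as claimed.
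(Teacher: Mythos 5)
Your proposal is correct, but it reaches the conclusion by a genuinely different route from the paper. You share the opening step --- the genus formula with $t=4$ ramified primes and $-1$ not a norm (forced by $q \equiv 3 \Mod{4}$) gives ${\rm{rank}}_2 A(F) = 2$, with genus field $F_G = \Q(\sqrt{2},\sqrt{p},\sqrt{qr})$ --- but then the paths diverge. The paper never touches the $4$-rank: it shows that the classes $[\p^{\prime}], [\q^{\prime}], [\mathfrak{r}^{\prime}]$ of the ramified primes above $p,q,r$ are nontrivial of order $2$ (each prime is inert in a suitable quadratic subextension of $F_G/F$, hence nonprincipal, and its square is generated by a rational prime), and then, since $\mathrm{Gal}(F/\Q)$ acts as inversion on $A(F)$, the Artin isomorphism $A(F)/A(F)^2 \cong \mathrm{Gal}(F_G/F)$ together with Nakayama's lemma yields $A(F) = \langle [\p^{\prime}], [\q^{\prime}] \rangle \cong \Z/2\Z \oplus \Z/2\Z$. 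You instead kill the $4$-rank via R\'edei's criterion, and your key claim does check out: with prime discriminants $8, p, -q, -r$ and the data $\left(\frac{2}{p}\right)=1$, $\left(\frac{2}{q}\right)=\left(\frac{2}{r}\right)=-1$, $\left(\frac{-1}{q}\right)=\left(\frac{-1}{r}\right)=-1$, $\left(\frac{q}{p}\right)\left(\frac{r}{p}\right)=-1$, the R\'edei matrix has rank $3$ in both cases of the symmetry between $q$ and $r$, the unique dependency being the sum of the rows indexed by $p$, $q$, $r$; so the narrow $4$-rank is $0$. Two refinements: your descent from narrow to ordinary is even easier than you suggest, since a quotient of an elementary abelian $2$-group is elementary abelian --- the norm-$(+1)$ fact is only needed to reconcile narrow $2$-rank $3$ with ordinary $2$-rank $2$, not for the final step; and what you call the R\'edei symbol $[D_1,D_2]$ conventionally denotes the triple symbol governing the $8$-rank, whereas for the $4$-rank the operative criterion is exactly the matrix rank (equivalently, absence of nontrivial second-kind decompositions $D = D_1D_2$), which is what you actually use. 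Comparatively, your method is more mechanical and makes transparent precisely where $\left(\frac{qr}{p}\right) = -1$ enters (if $\left(\frac{q}{p}\right)=\left(\frac{r}{p}\right)$ the matrix rank drops to $2$ and a $\Z/4\Z$ component can appear), while the paper's argument is more elementary, avoids the narrow/ordinary bookkeeping, and --- useful for the rest of the article --- exhibits explicit generators of $A(F)$ by ramified prime classes.
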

\begin{proof}
From the genus formula, the 2-rank of $A(F)$ is $2$, and its genus field is $F_G = \Q(\sqrt{2}, \sqrt{p}, \sqrt{qr})$. The three quadratic subfields of $F_G$ containing $F$ are $F(\sqrt{2})$, $F(\sqrt{p})$, and $F(\sqrt{qr})$. Let $\p^{\prime}, \q^{\prime}$ and $\mathfrak{r}^{\prime}$ be the prime ideals above $p, q$ and $r$ in $F$. Without loss of generality, suppose $\left( \dfrac{q}{p} \right) = \left( \dfrac{p}{q} \right) = -1$. The other case can be dealt with in a similar manner. In this case, $q$ is inert in $\Q(\sqrt{p})/\Q$, and thus, $\q^{\prime}$ is inert in $F(\sqrt{p})/F$. Likewise, $\mathfrak{r}^{\prime}$ and $\p^{\prime}$ are inert in $F(\sqrt{qr})/F$, as $r$ and $p$ are inert in $\Q(\sqrt{2p})/\Q$ and $\Q(\sqrt{qr})/\Q$ respectively. As these ideals are not totally split in $F_G/F$, they cannot be totally split in $L(F)/F$, where $L(F)$ is the $2$-Hilbert class field of $F$. Hence, they are not principal by class field theory. Thus, the classes $[\p^{\prime}], [\q^{\prime}]$ and $[\mathfrak{r}^{\prime}]$ are non-trivial, and of order $2$ in $A(F)$. Also, the decomposition fields of the primes $\p^{\prime}, \q^{\prime}$ and $\mathfrak{r}^{\prime}$ in $F_G/F$ are different, so their ideal classes must be mutually distinct.

\smallskip

Let Gal($F/\Q$) = $\langle \phi \rangle$. Then by Artin map, $A(F)/A(F)^{\phi -1}$ is isomorphic to Gal($F_G/F$). Since $\Q$ has class number $1$, $\phi$ acts as $-1$ on $A(F)$, and hence, $A(F)^{\phi -1} = A(F)^2$. As $\p^{\prime}, \q^{\prime}$ and $\mathfrak{r}^{\prime}$ are not split in $F_G$, the image of their ideal classes must be non-trivial in Gal($F_G/F$). This yields that $[\p^{\prime}], [\q^{\prime}]$ and $[\mathfrak{r}^{\prime}]$ do not belong to $A(F)^{\phi -1} = A(F)^2$, and Gal($F_G/F$) $ \cong A(F)/A(F)^{\phi -1} = A(F)/A(F)^2 \cong A(F)[2] = \langle [\p^{\prime}], [\q^{\prime}] \rangle$. As $[\p^{\prime}], [\q^{\prime}] \not\in A(F)^2$, $A(F) = \langle [\p^{\prime}], [\q^{\prime}] \rangle \cdot A(F)^2$. By Nakayama's lemma, $A(F) = \langle [\p^{\prime}], [\q^{\prime}] \rangle$, which is isomorphic to $\Z/2\Z \oplus \Z/2\Z$. 
\end{proof}

We shall now proceed to study $A(k_1)$ by using the genus formula for the extension $k_1/\Q_1$. In order to calculate the rank of $A(k_1)$, we need to determine the index $[E(\Q_1): E(\Q_1)\cap N_{k_1/\Q_1}k_1^{\times}]$. This is the order of an elementary 2-group, as the square of any element of $E(\Q_1)$ is clearly a norm in the extension $k_1/\Q_1$. We note that $E(\Q_1)$ is generated by $-1$ and $1 + \sqrt{2}$, and prove the following results.

\begin{propn}\label{not a norm in Q1} 
Let $K = \Q(\sqrt{d})$ where $d$ has a prime factor congruent to 3 modulo 4. Then, for $K_1 = K(\sqrt{2})$, $1 + \sqrt{2}$ is not a norm in the extension $K_1/\Q_1$.
\end{propn}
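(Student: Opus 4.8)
The plan is to use the norm criterion via Hilbert symbols recorded in the preliminaries: since $K_1 = \Q_1(\sqrt d)$ is the quadratic extension of $\Q_1 = \Q(\sqrt 2)$ obtained by adjoining $\sqrt d$, the unit $1 + \sqrt 2$ is a norm from $K_1^{\times}$ if and only if $\left(\dfrac{1 + \sqrt 2,\ d}{\mathfrak P}\right) = 1$ for every prime $\mathfrak P$ of $\Q_1$. Hence it suffices to exhibit a single prime of $\Q_1$ at which this Hilbert symbol equals $-1$. The decisive input is that $N_{\Q_1/\Q}(1 + \sqrt 2) = (1 + \sqrt 2)(1 - \sqrt 2) = -1$, combined with the hypothesis that $d$ has a rational prime factor $q \equiv 3 \Mod 4$, for which $-1$ is a quadratic non-residue.

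First I would fix such a prime $q \equiv 3 \Mod 4$ dividing $d$ and work at a prime $\q$ of $\Q_1$ lying above $q$. Since $q$ is odd it is unramified in $\Q_1/\Q$, so (taking $d$ squarefree) $v_{\q}(d) = 1$ is odd, while $1 + \sqrt 2$ is a unit and hence $v_{\q}(1 + \sqrt 2) = 0$. Writing $d$ as (unit)$\times$(uniformizer) at $\q$ and invoking the tame formula for the quadratic Hilbert symbol at the odd prime $\q$ (for which any two units pair trivially, and a unit pairs with a uniformizer via the residue character), the symbol collapses to $\left(\dfrac{1 + \sqrt 2,\ d}{\q}\right) = \chi_{\q}(1 + \sqrt 2)$, where $\chi_{\q}$ is the quadratic residue character of the residue field $\mathcal O_{\Q_1}/\q \cong \mathbb F_{q^f}$ and $f$ is the residue degree of $q$ in $\Q_1/\Q$.

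It then remains to decide whether $1 + \sqrt 2$ is a square modulo $\q$, and here I would split according to the splitting of $q$ in $\Q(\sqrt 2)$, equivalently $q \bmod 8$. If $q$ is inert ($q \equiv 3 \Mod 8$), the residue field is $\mathbb F_{q^2}$ and the identity $\chi_{\mathbb F_{q^2}}(x) = \chi_{\mathbb F_q}\big(N_{\mathbb F_{q^2}/\mathbb F_q}(x)\big)$ reduces the computation to $\mathbb F_q$; since Frobenius sends $\sqrt 2 \mapsto -\sqrt 2$, one gets $N_{\mathbb F_{q^2}/\mathbb F_q}(1 + \sqrt 2) = -1$, which is a non-residue as $q \equiv 3 \Mod 4$, so the symbol is $-1$. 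If $q$ splits ($q \equiv 7 \Mod 8$), there are two primes $\q, \q'$ above $q$; choosing $s \in \mathbb F_q$ with $s^2 = 2$, the two symbols are $\chi_q(1 + s)$ and $\chi_q(1 - s)$, whose product is $\chi_q\big((1 + s)(1 - s)\big) = \chi_q(-1) = -1$, so at least one of them equals $-1$. Either way a prime with Hilbert symbol $-1$ exists, and the norm criterion finishes the proof.

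The routine parts are the valuation bookkeeping and the invocation of the tame symbol formula. The main thing to get right is the case analysis on the splitting of $q$ in $\Q(\sqrt 2)$: the inert case is settled directly through the residue-field norm, but in the split case no single prime is visibly non-trivial, and the argument must instead use that the \emph{product} of the two local symbols above $q$ equals $\chi_q(-1) = -1$. In both branches the common mechanism is the same---$N(1 + \sqrt 2) \equiv -1 \Mod q$ is a non-residue precisely because $q \equiv 3 \Mod 4$---so the real content is organizing the two cases around this single fact.
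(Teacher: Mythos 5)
Your proof is correct, but it takes a genuinely different route from the paper's. The paper argues globally in two lines: if $N_{K_1/\Q_1}(\alpha) = 1+\sqrt{2}$, then applying $N_{\Q_1/\Q}$ and using norm-compatibility in the biquadratic diagram, $N_{\Q_1/\Q}\circ N_{K_1/\Q_1} = N_{K/\Q}\circ N_{K_1/K}$, one produces an element of $K$ of norm $-1$ over $\Q$, which is impossible when $d$ has a prime factor congruent to $3 \Mod{4}$. You instead verify the failure of the Hilbert-symbol criterion directly over $\Q_1$, and your computation checks out: $1+\sqrt{2}$ is a unit (as $N_{\Q_1/\Q}(1+\sqrt{2})=-1$), $v_{\q}(d)=1$ at any $\q \mid q$ for squarefree $d$ since the odd prime $q$ is unramified in $\Q_1/\Q$, so the tame formula reduces the symbol to $\chi_{\q}(1+\sqrt{2})$; in the inert case $q \equiv 3 \Mod{8}$ the identity $\chi_{\mathbb{F}_{q^2}} = \chi_{\mathbb{F}_q}\circ N_{\mathbb{F}_{q^2}/\mathbb{F}_q}$ together with $N(1+\sqrt{2})=-1$ gives symbol $-1$, and in the split case $q \equiv 7 \Mod{8}$ the product $\chi_q(1+s)\chi_q(1-s) = \chi_q(-1) = -1$ forces at least one bad prime even though you cannot name which. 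It is worth noting that both arguments hinge on the same arithmetic fact --- $N_{\Q_1/\Q}(1+\sqrt{2})=-1$ is a non-residue modulo $q$ --- and your calculation shows that the product of the local symbols at the primes above $q$ equals $\chi_q(-1)$ in either case, which is precisely the local shadow of the paper's global norm argument. What the paper's proof buys is brevity and freedom from case analysis and local machinery; what yours buys is an explicit localization of the obstruction at $q$, in the same Hilbert-symbol style the paper itself uses in Lemma \ref{rank of A(k_1)}. Two bookkeeping points you flag should remain explicit in a final write-up: the norm criterion involves the archimedean places as well, which is harmless since you only need a single place with symbol $-1$; and squarefreeness of $d$ (the paper's standing convention, as $d = pqr$) is what makes $v_{\q}(d)$ odd.
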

\begin{proof}
Suppose there exists an element $\alpha$ in $K_1 = \Q(\sqrt{2},\sqrt{d})$ such that $N_{K_1/\Q_1}(\alpha) = 1+\sqrt{2}$. Then we have $N_{\Q_1/\Q}\left(N_{K_1/\Q_1}(\alpha)\right) = -1$. On the other hand, since $K_1$ is bi-quadratic over $\Q$, $N_{\Q_1/\Q}\left(N_{K_1/\Q_1}(\alpha)\right) = N_{K/\Q}\left(N_{K_1/K}(\alpha)\right)$. This means that there is an element in $K$, whose norm is equal to $-1$ over $\Q$. This is not possible as $d$ has a prime factor congruent to 3 modulo 4. Thus, the proposition follows.
\end{proof}

\begin{lemma}\label{rank of A(k_1)}
In the extension $k_1/\Q_1$, $-1$ is a norm, and ${\rm{rank}_2} A(k_1) = 2$.
\end{lemma}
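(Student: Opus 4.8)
The plan is to apply the genus formula to the cyclic (quadratic) extension $k_1/\Q_1$, where $\Q_1 = \Q(\sqrt 2)$ and $k_1 = \Q(\sqrt 2, \sqrt{pqr})$. Since the base field $\Q_1$ has odd class number (indeed $h(\Q_1)=1$), the genus formula reduces to
\begin{equation*}
2^{\,{\rm rank}_2 A(k_1)} = \#A(k_1)^{G} = \dfrac{2^{\,t-1}}{\left[ E(\Q_1) : E(\Q_1)\cap N_{k_1/\Q_1} k_1^{\times}\right]},
\end{equation*}
so the whole computation comes down to two things: counting the number $t$ of primes of $\Q_1$ that ramify in $k_1/\Q_1$, and computing the unit index in the denominator. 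For the ramification count, the relative discriminant of $k_1/\Q_1 = \Q_1(\sqrt{pqr})$ is supported on the primes dividing $pqr$ together with possibly the prime above $2$; since $2$ splits in $k/\Q$ and the extension $k_1/\Q_1$ is obtained by adjoining $\sqrt{pqr}$ with $pqr$ odd, the prime above $2$ in $\Q_1$ is unramified, and I expect exactly the primes of $\Q_1$ lying over $p,q,r$ to ramify, giving the value of $t$ that the genus formula needs.

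The arithmetic heart of the lemma is the unit index. As the excerpt notes, $E(\Q_1)$ is generated by $-1$ and the fundamental unit $1+\sqrt 2$, and squares are automatically norms, so $E(\Q_1)\cap N_{k_1/\Q_1}k_1^\times$ sits between $E(\Q_1)^2$ and $E(\Q_1)$, making the index a power of $2$ of exponent at most $2$. I would determine it by deciding norm-membership of each of the two generators. For $1+\sqrt 2$, Proposition \ref{not a norm in Q1} applies directly with $d = pqr$ (which has the prime factors $q,r\equiv 3\pmod 4$), so $1+\sqrt 2$ is \emph{not} a norm from $k_1$. For $-1$, I must show the opposite, namely that $-1$ \emph{is} a norm in $k_1/\Q_1$ — this is the first assertion of the lemma and the main obstacle. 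Establishing that the two generators behave oppositely forces the index to be exactly $2$.

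To prove $-1$ is a norm, I would use the Hilbert-symbol criterion recalled in the Preliminaries: $-1$ is a norm from $k_1 = \Q_1(\sqrt{pqr})$ if and only if the Hilbert symbol $\left(\frac{-1,\,pqr}{\mathfrak P}\right)$ equals $1$ for every prime $\mathfrak P$ of $\Q_1$. Using the product formula and the multiplicativity property (1), this reduces to a finite local computation: the only nontrivial contributions come from the primes of $\Q_1$ above $2$, $p$, $q$, $r$, and the archimedean places. Since $\Q_1$ is totally real, at a real place the symbol $\left(\frac{-1,\,pqr}{\infty}\right)$ is $+1$ precisely because $pqr>0$. At the odd primes, the congruence conditions $p\equiv 9\pmod{16}$, $q\equiv r\equiv 3\pmod 8$ together with $\left(\frac{qr}{p}\right)=-1$ control the residue symbols $\left(\frac{-1}{\,\cdot\,}\right)$ in the residue fields; and the crucial point is that at the prime above $2$, the splitting of $2$ in $k/\Q$ (equivalently $pqr\equiv 1\pmod 8$, which holds since $9\cdot 3\cdot 3\equiv 1\pmod 8$) makes $\sqrt{pqr}$ a $2$-adic square locally, so that place contributes trivially as well. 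I expect all local symbols to multiply to $1$, yielding that $-1$ is a global norm.

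Once both generator computations are in hand, the unit index is exactly $2$, and substituting $t$ and this index into the displayed genus formula yields $2^{\,{\rm rank}_2 A(k_1)} = 2^{t-1}/2$, from which ${\rm rank}_2 A(k_1) = 2$ follows after inserting the value of $t$. The step I anticipate as delicate is the verification of $-1$ being a norm: the $\Z_2$-arithmetic at the split prime above $2$ in $\Q_1$ must be handled carefully, since it is there that the quartic-residue hypothesis $\left(\frac{2}{p}\right)_4 = -1$ and the refined congruence $p\equiv 9\pmod{16}$ (rather than merely $p\equiv 1\pmod 8$) are most likely to be needed to pin down the local Hilbert symbols.
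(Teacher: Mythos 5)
Your proposal is correct and follows essentially the same route as the paper: the genus formula for $k_1/\Q_1$, with $1+\sqrt{2}$ (hence also $-(1+\sqrt{2})$) ruled out as a norm via Proposition \ref{not a norm in Q1} and $-1$ shown to be a norm by local Hilbert-symbol computations, giving unit index exactly $2$ and then $2^{\,{\rm rank}_2 A(k_1)} = 2^{4-1}/2$, where $t=4$ because $p \equiv 1 \Mod{8}$ splits in $\Q_1$ while $q, r \equiv 3 \Mod{8}$ remain inert (a count you leave implicit and should make explicit). One correction to your anticipated delicate step: neither $p \equiv 9 \Mod{16}$ beyond $p \equiv 1 \Mod{8}$ nor $\left(\dfrac{2}{p}\right)_4 = -1$ is needed in this lemma at all --- at every prime $\mathfrak{P}$ of $\Q_1$ unramified in $k_1$, including the split prime above $2$, the symbol $\left(\dfrac{-1,\,pqr}{\mathfrak{P}}\right)$ is trivially $1$ simply because $-1$ is a unit (your local-square observation reaches the same conclusion), and at the four ramified primes the paper evaluates the symbols via the Frobenius in $\Q_1(\sqrt{-1})/\Q_1$, using that $p$, $q$, and $r$ all split in $\Q(\sqrt{-2})/\Q$, which is equivalent to your residue-field computation; the quartic-residue hypothesis only enters later, in the proof of Theorem \ref{X' is bdd} via Lemma \ref{quartic condn for unr outside p1}.
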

\begin{proof}
Since $k_1 = \Q_1(\sqrt{pqr})$, $-1$ is a norm in $k_1/\Q_1$ if and only if $\left(\dfrac{-1, pqr}{\mathfrak{P}}\right) = 1$ for every prime ideal $\mathfrak{P}$ of $\Q_1$. Since $-1$ is a unit, it is obvious that $\left(\dfrac{-1, pqr}{\mathfrak{P}}\right) = 1$ for any prime ideal $\mathfrak{P}$ of $\Q_1$ which is unramified in $k_1$. Therefore, we need to examine the symbol with respect to the prime ideals above $p, q$, and $r$ in $\Q_1$. As $p \equiv 1\Mod 8$, $p$ splits completely in $\Q_1$, and $\langle p \rangle = \p_1\p_2$, where $\p_1$ and $\p_2$ are prime ideals dividing $p$ in $\Q_1$. Owing to the multiplicative property of the Hilbert symbol, for $i = 1,2$, we have $$\left( \dfrac{-1, pqr}{\p_i}\right) =  \left( \dfrac{-1, p}{\p_i}\right)\left( \dfrac{-1, q}{\p_i}\right)\left( \dfrac{-1, r}{\p_i}\right).$$ 
The primes $\p_i$ are unramified in the extension $\Q_1(\sqrt{q})/\Q_1$ and $\Q_1(\sqrt{r})/\Q_1$. Therefore, for $i = 1$ and $2$, $$\left( \dfrac{-1, q}{\p_i}\right)=\left( \dfrac{-1, r}{\p_i}\right)=1, \text{ and } \left( \dfrac{-1, pqr}{\p_i}\right)=\left( \dfrac{-1, p}{\p_i}\right).$$
As $\left( \dfrac{-1, p}{\p_i}\right) = \left( \dfrac{p, -1}{\p_i}\right)^{-1}$, we consider the extension $\Q_1(\sqrt{-1})/\Q_1$. Since $p \equiv 1 \Mod{8}$, $p$ splits completely in $\Q_1/\Q$ and $\Q(\sqrt{-2})/\Q$. Thus, the primes $\p_i$ are totally split in $\Q_1(\sqrt{-1})/\Q_1$, and the corresponding Frobenius elements are equal to the identity map. As a result, $$\left( \dfrac{p, -1}{\p_i}\right)^{-1} = \left( \dfrac{-1, p}{\p_i}\right) = \left( \dfrac{-1, pqr}{\p_i}\right) = 1.$$ Since $ q \equiv 3\Mod{8}$, $\langle q \rangle$ is a prime ideal in $\Q_1$. As before, we have $$\left( \dfrac{-1, pqr}{\langle q \rangle}\right) = \left( \dfrac{-1, p}{\langle q \rangle}\right) \left( \dfrac{-1, q}{\langle q \rangle}\right) \left( \dfrac{-1, r}{\langle q \rangle}\right) =  \left( \dfrac{-1, q}{\langle q \rangle}\right)=  \left( \dfrac{q, -1}{\langle q \rangle}\right)^{-1}.$$ Now, $\left( \dfrac{-2}{q} \right) = 1$ leads to $\langle q \rangle$ being split in $\Q(\sqrt{-2})/\Q$, and thus in $\Q_1(\sqrt{-1})/\Q_1$. Again, the Frobenius element corresponding to $\langle q \rangle$ in $\Q_1(\sqrt{-1})/\Q_1$ is the identity map. It follows immediately that $\left( \dfrac{-1, pqr}{\langle q \rangle}\right) = 1$. The same argument holds for $\langle r \rangle$. Consequently, we have $\left( \dfrac{-1, pqr}{\langle r \rangle}\right) = 1$. Therefore, the Hilbert symbol $\left( \dfrac{-1, pqr}{\mathfrak{P}}\right)$ has value $1$ for all the prime ideals $\mathfrak{P}$ of $\Q_1$, and hence, $-1$ is a norm in $k_1/\Q_1$. Given that $-1$ is a norm and $1+\sqrt{2}$ is not a norm (from Proposition \ref{not a norm in Q1}) in $k_1/\Q_1$, we conclude that $-(1+\sqrt{2})$ is also not a norm in $k_1/\Q_1$. The number of primes ramified in $k_1/\Q_1$ is equal to $4$. Employing these facts in the genus formula for $k_1/\Q_1$, we obtain,
$2^{\ {\rm{rank}_2}A(k_1)} = \frac{2^{4-1}}{2} = 2^2$. Hence, ${\rm{rank}_2}A(k_1) = 2$.
\end{proof}

In \cite{kumakawa}, Kumakawa studied the $\Z_2$-extension of a different family of real quadratic fields $K = \Q(\sqrt{pq})$. He derived a relation between the order of $A(K_{n+1})$ and the orders of the 2-class groups of some of the subfields of $K_{n+1}$ containing $\Q_n$. This relation was further used to estimate the order of $A(K_2)$. We now show that when $K=\Q(\sqrt{d})$, with $d \equiv 1\Mod{4}$, a finer relation can be deduced  for the $\Z_2$-extension of $K$. Let $K_n, K_{n}^{\prime}$ and $\Q_{n+1}$ be the three subfields of $K_{n+1}$ containing $\Q_n$. For $n=0$, $K_0^{\prime} = \Q(\sqrt{2d})$, for $n=1$, $K_1^{\prime} = \Q\left(\sqrt{(2+\sqrt{2})d}\right)$, and so on. For any fixed $n \geq 0$, suppose $\rho$ is the generator of \rm{Gal}$(K_{n+1}/K_n)$ and $\sigma$ is the generator of \rm{Gal}$(K_{n+1}/\Q_{n+1})$. Since $K_{n+1}/\Q_n$ is a bi-quadratic extension, \rm{Gal}$(K_{n+1}/K_n^{\prime})$ must be generated by $\rho\sigma$. The proof follows similarly as given in \cite[Lemma 2.1]{kumakawa} with some modification at the end, which we present here for the sake of completeness. 

\begin{propn}\label{Bound on A_{n+1}}
Let $K= \Q(\sqrt{d})$, where $d \equiv 1\Mod{4}$. Then, $\#A(K_{n+1}) \leq \#A(K_{n+1})^{\rho + 1}\cdot\#A(K_n^{\prime})/2.$ In particular, $\#A(K_{n+1}) \leq \#A(K_n)\cdot\#A(K_n^{\prime})/2$.
\end{propn}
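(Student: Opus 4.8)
The plan is to work inside the Klein four-group $V=\mathrm{Gal}(K_{n+1}/\Q_n)=\{1,\rho,\sigma,\rho\sigma\}$, whose three quadratic subextensions over $\Q_n$ are $K_n$ (fixed by $\rho$), $\Q_{n+1}$ (fixed by $\sigma$), and $K_n^{\prime}$ (fixed by $\rho\sigma$). Writing the $2$-class groups additively for the moment, I would first record the short exact sequence attached to the endomorphism $1+\rho$ of $A(K_{n+1})$,
\[ 0 \longrightarrow \ker(1+\rho) \longrightarrow A(K_{n+1}) \xrightarrow{\,1+\rho\,} A(K_{n+1})^{\rho+1} \longrightarrow 0, \]
which gives $\#A(K_{n+1})=\#\ker(1+\rho)\cdot\#A(K_{n+1})^{\rho+1}$. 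Hence the first inequality is equivalent to the single estimate $\#\ker(1+\rho)\le \#A(K_n^{\prime})/2$, and that is what I would concentrate on.

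The second step is to identify $\ker(1+\rho)$ with the ambiguous classes of the quadratic extension $K_{n+1}/K_n^{\prime}$. The key input is that $2$ is the unique ramified prime of $\Q_{n+1}/\Q$ and is totally ramified while $A(\Q)$ is trivial, so the genus formula forces $A(\Q_{n+1})=1$; consequently the endomorphism $1+\sigma$, which equals the extension-of-ideals map composed with $N_{K_{n+1}/\Q_{n+1}}$, has image inside the (trivial) extension of $A(\Q_{n+1})$ and hence vanishes. Thus $\sigma$ acts as $-1$ on $A(K_{n+1})$. For $x\in\ker(1+\rho)$ we have $\rho x=-x$, so $(\rho\sigma)x=\rho(-x)=-\rho x=x$; therefore $\ker(1+\rho)\subseteq A(K_{n+1})^{\langle\rho\sigma\rangle}=A(K_{n+1})^{G^{\prime}}$, where $G^{\prime}=\mathrm{Gal}(K_{n+1}/K_n^{\prime})$. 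Applying Chevalley's ambiguous class number formula (the genus formula stated above) to $K_{n+1}/K_n^{\prime}$ then gives
\[ \#A(K_{n+1})^{G^{\prime}} = \#A(K_n^{\prime})\cdot\frac{2^{t^{\prime}-1}}{[E(K_n^{\prime}):E(K_n^{\prime})\cap N_{K_{n+1}/K_n^{\prime}}K_{n+1}^{\times}]}, \]
with $t^{\prime}$ the number of finite primes of $K_n^{\prime}$ that ramify in $K_{n+1}$, and no infinite ramification since all fields here are totally real.

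The crux, and what I expect to be the main obstacle, is to show $t^{\prime}=0$; this is exactly where the hypothesis $d\equiv 1 \Mod 4$ enters and where the argument departs from \cite[Lemma 2.1]{kumakawa}. Using the fact that the inertia subgroup of a subextension with group $H\subseteq V$ is the intersection of $H$ with the inertia subgroup in $V$, I would argue as follows: for an odd prime $\mathfrak{p}\mid d$ of $\Q_n$, the inertia group in $V$ is $\langle\sigma\rangle$ (as $\mathfrak{p}$ is unramified only in $\Q_{n+1}$ among the three subfields), and $\langle\sigma\rangle\cap\langle\rho\sigma\rangle=1$; while for the prime above $2$, the condition $d\equiv 1 \Mod 4$ makes $2$ unramified in $K_n/\Q_n$, so its inertia group in $V$ is $\langle\rho\rangle$, and $\langle\rho\rangle\cap\langle\rho\sigma\rangle=1$. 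Tracking these ramification indices through the biquadratic diagram at the wildly ramified prime $2$ is the delicate point. Once $t^{\prime}=0$ is in hand the genus formula yields $\#A(K_{n+1})^{G^{\prime}}=\#A(K_n^{\prime})/\big(2\,[E(K_n^{\prime}):E(K_n^{\prime})\cap N_{K_{n+1}/K_n^{\prime}}K_{n+1}^{\times}]\big)\le \#A(K_n^{\prime})/2$, hence $\#\ker(1+\rho)\le \#A(K_n^{\prime})/2$ and the first inequality follows. Finally, since $A(K_{n+1})^{\rho+1}$ is the image of $A(K_{n+1})$ under $N_{K_{n+1}/K_n}$ followed by extension of ideals, it is a subgroup of the image of $A(K_n)$, so $\#A(K_{n+1})^{\rho+1}\le \#A(K_n)$; combining this with the first inequality gives the stated bound $\#A(K_{n+1})\le \#A(K_n)\cdot\#A(K_n^{\prime})/2$.
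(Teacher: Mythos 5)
Your proposal is correct and follows essentially the same route as the paper: the paper likewise uses the fact that $\sigma$ acts as $-1$ (via the odd class number of $\Q_{n+1}$) to split $\#A(K_{n+1})$ into $\#A(K_{n+1})^{\langle\sigma\rho\rangle}\cdot\#A(K_{n+1})^{\rho+1}$ — your exact sequence for $1+\rho$ with $\ker(1+\rho)\subseteq A(K_{n+1})^{\langle\rho\sigma\rangle}$ is just a transposed packaging of this, since in fact $\ker(1+\rho)=\ker(\sigma\rho-1)=A(K_{n+1})^{\langle\sigma\rho\rangle}$ — then applies the genus formula to the everywhere-unramified quadratic extension $K_{n+1}/K_n^{\prime}$ and bounds $A(K_{n+1})^{\rho+1}$ through the norm to $A(K_n)$. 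Your inertia-group verification that $t^{\prime}=0$ and your genus-theoretic derivation of $A(\Q_{n+1})=1$ correctly fill in details the paper asserts directly (citing $d\equiv 1\Mod 4$ and Washington, respectively), so there is no substantive divergence.
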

\begin{proof}
The map $\sigma$ acts as $-1$ on $K_{n+1}$ as the class number of $\Q_{n+1}$ is odd (cf. Theorem 10.4, \cite{washington_book}). Therefore, the sets  $A(K_{n+1})^{\sigma\rho -1} = \{ [\mathfrak{a}]^{\sigma\rho}\cdot[\mathfrak{a}]^{-1}: [\mathfrak{a}] \in A(K_{n+1})\}$ and $A(K_{n+1})^{\rho+1} = \{ [\mathfrak{a}]^{\rho}\cdot[\mathfrak{a}]: [\mathfrak{a}] \in A(K_{n+1})\}$ are equal. Combining this equality with the exact sequence
$$1 \longrightarrow A(K_{n+1})^{ \langle \sigma\rho \rangle} \longrightarrow A(K_{n+1}) \longrightarrow A(K_{n+1})^{\sigma\rho -1} \longrightarrow 1,$$ we obtain $\#A(K_{n+1}) = \#A(K_{n+1})^{ \langle \sigma\rho \rangle}\cdot\#A(K_{n+1})^{\rho +1}$. As $d \equiv 1 \Mod{4}$, the extension $K_{n+1}/K_n^{\prime}$ is unramified. Applying the genus formula to the quadratic extension $K_{n+1}/K_n^{\prime}$, we obtain $\#A(K_{n+1})^{ \langle \sigma\rho \rangle} \leq \#A(K_{n}^{\prime})/2$. In addition, $1 + \rho$ acts as the norm map from $A(K_{n+1})$ to $A(K_n)$. It follows that $\#A(K_{n+1})^{\rho + 1} \leq \#A(K_n)$. Both the relations mentioned in the statement of the lemma follow immediately from these observations.  
\end{proof}

\begin{cor}\label{A(k1)=(2,2)}
The group $A(k_1)$ is isomorphic to $\Z/2\Z \oplus \Z/2\Z$.
\end{cor}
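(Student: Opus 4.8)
The plan is to combine the rank computation already in hand with the order bound from Proposition \ref{Bound on A_{n+1}}, so that the corollary follows by a short squeeze argument rather than any new computation.

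First I would apply Proposition \ref{Bound on A_{n+1}} to $K = k = \Q(\sqrt{pqr})$ at level $n = 0$. Since $p \equiv 9 \Mod{16}$ and $q, r \equiv 3 \Mod 8$, we have $pqr \equiv 1 \Mod 4$, so the hypothesis $d \equiv 1 \Mod 4$ of the proposition is satisfied, and the relevant intermediate field is $k_0^{\prime} = \Q(\sqrt{2 \cdot pqr}) = F$. The proposition then yields $\#A(k_1) \leq \#A(k) \cdot \#A(F)/2$.

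Next I would feed in the two orders established earlier. By Lemma \ref{A_k = 2}, together with the fact that $\left(\dfrac{qr}{p}\right) = \left(\dfrac{q}{p}\right)\left(\dfrac{r}{p}\right) = -1$ forces $-1 \in \left\{ \left(\dfrac{q}{p}\right), \left(\dfrac{r}{p}\right) \right\}$, we obtain $\#A(k) = 2$; and Lemma \ref{A(F) = (2,2)} gives $A(F) \cong \Z/2\Z \oplus \Z/2\Z$, so $\#A(F) = 4$. Substituting, $\#A(k_1) \leq 2 \cdot 4 / 2 = 4$. On the other hand, Lemma \ref{rank of A(k_1)} gives ${\rm rank}_2 A(k_1) = 2$, whence $\#A(k_1) \geq 2^2 = 4$. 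Combining the two inequalities forces $\#A(k_1) = 4$, and a group of order $4$ with $2$-rank equal to $2$ is necessarily isomorphic to $\Z/2\Z \oplus \Z/2\Z$.

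There is no serious obstacle here: every ingredient has been assembled in the preceding lemmas, and the only points requiring care are the bookkeeping checks that $d = pqr \equiv 1 \Mod 4$ and that $k_0^{\prime}$ is precisely the field $F$ of Lemma \ref{A(F) = (2,2)}, so that the general bound specializes correctly. The argument is essentially a pinching of the order of $A(k_1)$ between the lower bound coming from its $2$-rank and the upper bound coming from the norm/genus estimate.
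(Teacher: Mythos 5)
Your proof is correct and is essentially identical to the paper's own argument: both squeeze $\#A(k_1)$ between the lower bound $2^{\mathrm{rank}_2 A(k_1)} = 4$ from Lemma \ref{rank of A(k_1)} and the upper bound $\#A(k_0)\cdot\#A(k_0^{\prime})/2 = 4$ from Proposition \ref{Bound on A_{n+1}} combined with Lemmas \ref{A_k = 2} and \ref{A(F) = (2,2)}. Your explicit verifications that $pqr \equiv 1 \Mod{4}$, that $k_0^{\prime} = F$, and that $\left(\frac{qr}{p}\right) = -1$ forces $-1 \in \left\{ \left(\frac{q}{p}\right), \left(\frac{r}{p}\right) \right\}$ are bookkeeping steps the paper leaves implicit, and they check out.
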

\begin{proof}
It is clear that $k_0^{\prime}$ is a sub-class of fields $F$ mentioned in Lemma \ref{A(F) = (2,2)}. The group $A(k_1)$ has rank 2 from Lemma \ref{rank of A(k_1)}. From Proposition \ref{Bound on A_{n+1}} and lemmas \ref{A_k = 2}, \ref{A(F) = (2,2)}, $\#A(k_1) \leq \#A(k_0)\cdot\#A(k_0^{\prime})/2 = 2\cdot 4/2 = 4$. Hence, its order must be exactly equal to 4, and the group must be isomorphic to $\Z/2\Z \oplus \Z/2\Z$.
\end{proof}

As the next step, we shall prove the following lemma on the rank of the 2-class groups of the subsequent fields $k_n$ in the $\Z_2$-extension of $k$ for $n \geq 2$.

\begin{lemma}\label{rank A(Kn)=2}
The 2-rank of $A(k_2)$ is equal to 2. Further, ${\rm{rank}_2} A(k_n) = 2$ for all $n\geq 1$.
\end{lemma}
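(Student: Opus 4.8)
The plan is to reduce the whole statement to the single computation ${\rm{rank}_2}A(k_2)=2$ and then invoke a stability result. For the reduction: since $2$ splits in $k$, the two primes above $2$ are totally ramified throughout the tower $k_\infty/k$, so Theorem \ref{fukuda's result} applies with $n_0=0$. We already know ${\rm{rank}_2}A(k_1)=2$ from Corollary \ref{A(k1)=(2,2)}. Hence, once ${\rm{rank}_2}A(k_2)=2={\rm{rank}_2}A(k_1)$ is established, part (2) of Theorem \ref{fukuda's result} applied at $n=1\ge n_0$ gives ${\rm{rank}_2}A(k_n)=2$ for all $n\ge 1$. Everything therefore rests on the second layer, which I would bound from below and above.

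For the lower bound, I would observe that $k_2/k_1$ is ramified (the primes over $2$ are totally ramified), so $k_2\cap L(k_1)=k_1$, where $L(k_1)$ is the $2$-Hilbert class field of $k_1$; consequently the norm map $N_{k_2/k_1}\colon A(k_2)\to A(k_1)$ is surjective. As a quotient of a finite abelian $2$-group has no larger $2$-rank, this yields ${\rm{rank}_2}A(k_2)\ge{\rm{rank}_2}A(k_1)=2$.

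For the upper bound I would apply the genus formula to the quadratic extension $k_2/\Q_2$, with $G={\rm Gal}(k_2/\Q_2)$. The class number of $\Q_2$ is odd (as used for $\Q_{n+1}$ in Proposition \ref{Bound on A_{n+1}}), so $A(\Q_2)=0$, the group $G$ acts by inversion on $A(k_2)$, and $\#A(k_2)^{G}=2^{\,{\rm{rank}_2}A(k_2)}$. The congruence conditions pin down the splitting of the ramified primes in $\Q_2$: since $p\equiv 9\Mod{16}$ its Frobenius in ${\rm Gal}(\Q_2/\Q)\cong\Z/4\Z$ has order $2$, so $p$ has exactly two primes above it in $\Q_2$, while $q,r\equiv 3\Mod 8$ stay inert; as $2$ is unramified in $k_2/\Q_2$, the number of ramified primes is $t=4$. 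The genus formula then reads
\begin{equation*}
2^{\,{\rm{rank}_2}A(k_2)}=\frac{2^{\,t-1}}{[E(\Q_2):E(\Q_2)\cap N_{k_2/\Q_2}k_2^{\times}]}=\frac{2^{3}}{[E(\Q_2):E(\Q_2)\cap N_{k_2/\Q_2}k_2^{\times}]},
\end{equation*}
so it suffices to exhibit a single unit of $\Q_2$ that is not a norm from $k_2$, forcing the index to be $\ge 2$ and hence ${\rm{rank}_2}A(k_2)\le 2$. I would take $\theta=\sqrt{2+\sqrt 2}$, a root of $f(x)=x^4-4x^2+2$, and set $\epsilon_0=\theta-1$, an algebraic integer with $N_{\Q_2/\Q}(\epsilon_0)=f(1)=-1$, hence a unit. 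If $\epsilon_0=N_{k_2/\Q_2}(\alpha)$ for some $\alpha\in k_2^{\times}$, then by transitivity of the norm $-1=N_{\Q_2/\Q}(\epsilon_0)=N_{k/\Q}\!\bigl(N_{k_2/k}(\alpha)\bigr)$, making $-1$ a norm from $k/\Q$; this contradicts the fact (established in Section 3, since $q\equiv 3\Mod 4$ divides $D_k$) that $-1$ is not a norm in $k/\Q$. This mirrors exactly the mechanism of Proposition \ref{not a norm in Q1}.

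Combining the two bounds gives ${\rm{rank}_2}A(k_2)=2$ (and, as a byproduct, that the unit index is exactly $2$), after which the reduction step completes the proof for all $n\ge1$. I expect the main obstacle to be the bookkeeping of the genus-formula input: justifying $A(\Q_2)=0$, determining the splitting types of $p,q,r$ in $\Q_2$ precisely so that $t=4$, and verifying that $\epsilon_0=\sqrt{2+\sqrt2}-1$ really is a non-norm unit. The key design choice that keeps this tractable is using the independently established lower bound ${\rm{rank}_2}A(k_2)\ge2$, which lets the argument succeed with a single non-norm unit rather than requiring the full unit group $E(\Q_2)$ and all of its norm classes.
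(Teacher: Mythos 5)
Your proposal is correct, and its skeleton coincides with the paper's: the lower bound ${\rm{rank}_2}A(k_2)\geq{\rm{rank}_2}A(k_1)=2$ via surjectivity of the norm map in the totally ramified extension $k_2/k_1$, the upper bound via the genus formula for $k_2/\Q_2$ with $t=4$ ramified primes and a single unit of $\Q_2$ that is not a norm from $k_2$, and the propagation to all $n\geq 1$ by part (2) of Theorem \ref{fukuda's result}. The one genuinely different ingredient is how the non-norm unit is produced. The paper argues abstractly: since $\Q_1$ and $\Q_2$ have odd class number, the genus formula applied to $\Q_2/\Q_1$ shows the norm map from $\Q_2^{\times}$ onto $E(\Q_1)$ is surjective, so some $\alpha\in\Q_2^{\times}$ satisfies $N_{\Q_2/\Q_1}(\alpha)=1+\sqrt{2}$; if $\alpha$ were a norm from $k_2$, transitivity would make $1+\sqrt{2}$ a norm in $k_1/\Q_1$, contradicting Proposition \ref{not a norm in Q1}. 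You instead exhibit the explicit unit $\epsilon_0=\sqrt{2+\sqrt{2}}-1$ with $N_{\Q_2/\Q}(\epsilon_0)=-1$ and push the norm all the way down to $\Q$, contradicting the fact (noted at the start of Section 3) that $-1$ is not a norm in $k/\Q$ because $D_k$ has a prime factor congruent to $3$ modulo $4$; your Frobenius bookkeeping for $t=4$ (two primes over $p$ since $p\equiv 9\Mod{16}$ has order $2$ in ${\rm Gal}(\Q_2/\Q)$, with $q$ and $r$ inert) agrees with the paper's. Your variant even carries a small technical advantage: the genus-formula index $\left[E(\Q_2):E(\Q_2)\cap N_{k_2/\Q_2}k_2^{\times}\right]$ concerns \emph{units}, and $\epsilon_0$ is manifestly a unit, whereas the paper's $\alpha$ is a priori only a field element whose norm is a unit (upgrading it to a unit requires a short standard argument using the odd class number of $\Q_2$, which the paper leaves implicit). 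In fact, since $N_{\Q_2/\Q_1}(\epsilon_0)=\bigl(\sqrt{2+\sqrt{2}}-1\bigr)\bigl(-\sqrt{2+\sqrt{2}}-1\bigr)=1-(2+\sqrt{2})=-(1+\sqrt{2})$, your unit is precisely an explicit instance of the paper's abstract preimage, up to sign; what the paper's formulation buys in exchange is a level-uniform mechanism that does not depend on writing down a concrete unit at each layer.
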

\begin{proof}
As the class numbers of $\Q_1$ and $\Q_2$ are odd, it is clear from the genus formula that the norm map from $\Q_2^{\times}$ to $E(\Q_1)$ is surjective. Thus, there exists an element $\alpha$ in $\Q_2$, whose norm over $\Q_1$ is equal to $1+\sqrt{2}$. Now suppose (if possible), that there exists a $u$ in $k_2^{\times}$ such that $N_{k_2/\Q_2}(u) = \alpha$. Then, taking norm over $\Q_1$, we get $N_{\Q_2/\Q_1}(N_{k_2/\Q_2}(u)) = N_{\Q_2/\Q_1}(\alpha) = 1+\sqrt{2}$. But this leads to  $N_{k_2/\Q_1}(u) = N_{k_1/\Q_1}(N_{k_2/k_1}(u)) = 1 + \sqrt{2}$, which  shows that $k_1$ has an element of norm $1+\sqrt{2}$ over $\Q_1$. This is a contradiction to Proposition \ref{not a norm in Q1}, and hence, no such $u$ exists in $k_2$. Therefore, $E(\Q_2) \neq E(\Q_2)\cap N_{k_2/\Q_2}(k_2^{\times})$, and $[E(\Q_2): E(\Q_2)\cap N_{k_2/\Q_2}(k_2^{\times})] \geq 2$.

Since $p \equiv 9 \Mod {16}$, the primes $\p_1$ and $\p_2$ of $\Q_1$ lying above $\langle p \rangle$ remain inert in $\Q_2/\Q_1$. Therefore, only four primes of $\Q_2$ are ramified in $k_2/\Q_2$. Incorporating this with the aforementioned conclusion in the genus formula for $k_2/\Q_2$, we obtain $ {\rm{rank}_2}A(k_2) \leq 2$. The primes above $2$ are totally ramified in $k_2/k_1$ and hence, the norm map from $A(k_2)$ to $A(k_1)$ is surjective. Consequently, ${\rm{rank}_2}A(k_2) \geq {\rm{rank}_2}A(k_1) = 2$, and thus, ${\rm{rank}_2}A(k_2) =2 = {\rm{rank}_2}A(k_1)$. The primes above $2$ are totally ramified in $k_{\infty}/k$, and hence in $k_{\infty}/k_1$. Therefore, from part $(2)$ of Theorem \ref{fukuda's result}, ${\rm{rank}_2}A(k_n) = 2$ for all $n \geq 1$.
\end{proof}

\section{Structure of $A^{\prime}(k_n)$}
In this section, we shall primarily focus on the groups $A^{\prime}(k_0)$ and $A^{\prime}(k_1)$, with the objective of gathering information on their orders. 

\begin{propn}
Suppose $K = \Q(\sqrt{pqr})$ with $p \equiv 1 \Mod{8},\ q,r \equiv 3\Mod 4$ such that $qr \equiv 1\Mod{8}$, and $-1 \in \left\lbrace \left(\dfrac{q}{p}\right), \left(\dfrac{r}{p}\right) \right\rbrace$. Then, $\#A^{\prime}(K) = 2$.
\end{propn}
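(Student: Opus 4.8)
The plan is to reduce the computation of $A^{\prime}(K)$ to the single question of whether the primes of $K$ above $2$ are principal. First I would note that the hypotheses here are a special case of those in Lemma \ref{A_k = 2}: we have $p \equiv 1 \Mod 4$, $q,r \equiv 3 \Mod 4$, and $-1 \in \left\lbrace \left(\frac{q}{p}\right), \left(\frac{r}{p}\right) \right\rbrace$, so Lemma \ref{A_k = 2} yields $\#A(K) = 2$, i.e. $A(K) \cong \Z/2\Z$. Since $qr \equiv 1 \Mod 8$ and $p \equiv 1 \Mod 8$, the discriminant is $D_K = pqr$, and as in the discussion preceding Lemma \ref{A_k = 2} the genus field is $K_G = \Q(\sqrt{p}, \sqrt{qr})$, an unramified quadratic extension of $K$. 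Because $\#A(K) = 2 = [K_G : K]$, the genus field must coincide with the full $2$-Hilbert class field $L(K)$ of $K$.

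Next I would observe that, because $pqr \equiv 1 \Mod 8$, the prime $2$ splits in $K$, say $\langle 2 \rangle = \mathfrak{p}_2 \mathfrak{p}_2^{\prime}$; as $\langle 2 \rangle$ is principal we get $[\mathfrak{p}_2^{\prime}] = [\mathfrak{p}_2]^{-1}$, so $D(K) = \langle [\mathfrak{p}_2] \rangle$. Consequently $A^{\prime}(K) = A(K)/D(K)$ equals $\Z/2\Z$ exactly when $[\mathfrak{p}_2]$ is trivial, i.e. when $\mathfrak{p}_2$ is principal, and is trivial otherwise. Thus the whole statement comes down to showing that the primes of $K$ above $2$ are principal.

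To settle this I would use the identification $L(K) = K_G$ together with the Artin map: the prime $\mathfrak{p}_2$ is principal if and only if its Frobenius in $\mathrm{Gal}(L(K)/K)$ is trivial, equivalently $\mathfrak{p}_2$ splits completely in $K_G/K$. Now $K_G = \Q(\sqrt{p}, \sqrt{qr})$ is biquadratic, with quadratic subfields $\Q(\sqrt{p})$, $\Q(\sqrt{qr})$, and $\Q(\sqrt{pqr})$. The conditions $p \equiv 1 \Mod 8$ and $qr \equiv 1 \Mod 8$ (whence $pqr \equiv 1 \Mod 8$) guarantee that $2$ splits in each of these three quadratic fields, and hence splits completely in $K_G/\Q$. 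Therefore every prime of $K$ above $2$ splits completely in $K_G/K$, so $[\mathfrak{p}_2] = 1$ in $A(K)$, giving $D(K) = 1$ and $A^{\prime}(K) = A(K) \cong \Z/2\Z$.

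The argument is mostly bookkeeping of splitting behaviour once the structural facts are assembled, so I do not anticipate a serious obstacle; the points needing care are the justification that the genus field is the \emph{full} $2$-Hilbert class field (resting on $\#A(K) = 2$ from Lemma \ref{A_k = 2}) and the translation ``principal $\Leftrightarrow$ splits completely in $L(K)$'' via the Artin map. An alternative entirely in the spirit of the paper's later computations would be to check directly, using Hilbert symbols, that the prime above $2$ pairs trivially in $K_G/K$; but the splitting-field argument is cleaner and avoids case analysis.
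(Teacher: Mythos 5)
Your proposal is correct and follows essentially the same route as the paper: Lemma \ref{A_k = 2} gives $\#A(K)=2$, hence the genus field $K_G=\Q(\sqrt{p},\sqrt{qr})$ is the full $2$-Hilbert class field, and the congruences $p\equiv 1\Mod{8}$, $qr\equiv 1\Mod{8}$ force the primes of $K$ above $2$ to split completely in $K_G$. The only difference is presentational: you deduce $D(K)=1$ (principality of the primes above $2$, which the paper records separately in Remark \ref{A'(k0) =2}) and then use $A^{\prime}(K)=A(K)/D(K)$, while the paper concludes directly from the definition of $A^{\prime}(K)$ via the maximal subextension of $L(K)$ in which the primes above $2$ split completely.
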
 
\begin{proof}
From Lemma \ref{A_k = 2}, $\#A(K) = 2$ and the 2-Hilbert class field of $K$ is same as its genus field $K_G = Q(\sqrt{p}, \sqrt{qr})$. Since $p \equiv 1 \Mod{8}$ and $qr \equiv 1\Mod{8}$, the prime $2$ splits completely in the field extensions $\Q(\sqrt{p})/\Q$ and $\Q(\sqrt{qr})/\Q$. Thus, the primes above $2$ in $K$ split completely in $K_G$. Therefore by definition, $A^{\prime}(K)$ is isomorphic to Gal($K_G/K$), which is of order $2$.  
\end{proof}

\begin{rmk}\label{A'(k0) =2}
The family of fields $k$ adhering to Condition (\ref{Cond 1}) forms
a sub-class of fields mentioned in the proposition above. Hence, for such $k$, $\#A^{\prime}(k) = \#A^{\prime}(k_0)  = 2$. Let $\ell_{n1}$ and $\ell_{n2}$ be the the prime ideals above $2$ in $k_n$ for any $n$. For $n=0$, as $\ell_{01}$ and $\ell_{02}$ are totally split in the 2-Hilbert class field of $k_0$, these ideals must be principal, i.e, $D(k_0)$ is the trivial group.
\end{rmk}

\noindent As $p \equiv 9 \Mod {16}$, we have $\langle p \rangle = \p_1\p_2$ in $\Q_1$. These prime ideals are principal as $\Q_1$ is a principal ideal domain. The generators of these ideals can be chosen in such a way that they are totally positive. Let $p_1$ and $p_2$ be such elements in $\Q_1$. From \cite[page no. 242]{nishino}, we obtain that $p_i \equiv \pm 3, \pm (1+2\sqrt{2}) \Mod{4\sqrt{2}}$, for $i = 1,2$. 
Our aim is to calculate the rank and order of $A^{\prime}(k_1)$. As a preparation, we first refer to some results that are employed frequently to characterize extensions of $\Q_1$ that are ramified and unramified at certain primes (cf. \cite{fukuda-komatsu}, \cite{kumakawa}, \cite{Mizusawa ProcAMS}, \cite{nishino}). These are further used to determine unramified extensions of $k_1$. 
\begin{lemma}\cite[Lemma 5]{Mizusawa ProcAMS}\label{cong condn for unram 2}
 Let $\alpha \in \Q_1$ be a non-square, coprime to $2$. Then, the following hold:
 \begin{enumerate}
     \item The ideal $\langle \sqrt{2} \rangle$ is unramified in $\Q_1(\sqrt{\alpha})/\Q_1$ if and only if $\alpha \equiv 1 \text{ or } 3 +2\sqrt{2} \Mod{4}$.
     \item The ideal $\langle \sqrt{2} \rangle$ is totally split in $\Q_1(\sqrt{\alpha})/\Q_1$ if and only if $\alpha \equiv 1 \text{ or } 3 +2\sqrt{2} \Mod{4\sqrt{2}}$.
 \end{enumerate} 
\end{lemma}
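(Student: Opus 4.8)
The plan is to work in the completion of $\Q_1$ at the prime $\langle\sqrt2\rangle$, since both assertions are statements about how the prime $\langle\sqrt2\rangle$ behaves in the quadratic extension $\Q_1(\sqrt\alpha)/\Q_1$, and such local behavior is governed entirely by congruences of $\alpha$ modulo powers of $\sqrt2$. First I would recall that $\Q_1=\Q(\sqrt2)$ has ring of integers $\Z[\sqrt2]$, that $2$ ramifies as $\langle2\rangle=\langle\sqrt2\rangle^2$, and that the completion $(\Q_1)_{\langle\sqrt2\rangle}$ is the dyadic local field $\Q_2(\sqrt2)$, a ramified quadratic extension of $\Q_2$ with uniformizer $\pi=\sqrt2$ and residue field $\mathbb{F}_2$. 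For a non-square $\alpha$ coprime to $2$, the extension $\Q_1(\sqrt\alpha)/\Q_1$ is unramified (resp.\ totally split) at $\langle\sqrt2\rangle$ exactly when $\alpha$ is a unit that becomes a square in the unramified quadratic extension of the local field (resp.\ is already a square in $(\Q_1)_{\langle\sqrt2\rangle}$). So the whole lemma reduces to computing the square class of the local unit $\alpha$ in $(\Q_1)_{\langle\sqrt2\rangle}^\times/((\Q_1)_{\langle\sqrt2\rangle}^\times)^2$ and identifying which congruence classes mod $4$ and mod $4\sqrt2$ correspond to unramified and to split respectively.

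The key computational step is to pin down the filtration of units $U^{(i)}=1+\langle\sqrt2\rangle^i$ and to determine at which level squaring becomes surjective. Since we are at a dyadic prime, a unit $u$ is a square iff $u\equiv1$ to a high enough $\sqrt2$-adic precision; the relevant threshold is controlled by $2e+1$ where $e=2$ is the absolute ramification index of $(\Q_1)_{\langle\sqrt2\rangle}$ over $\Q_2$, giving the cutoff at $\langle\sqrt2\rangle^{5}$, i.e.\ modulo $4\sqrt2$. I would compute $(1+a\sqrt2)^2=1+2a\sqrt2+2a^2$ and $(a+b\sqrt2)^2$ for representatives to see precisely which residues mod $4$ and mod $4\sqrt2$ are squares, which are units-times-squares becoming squares in the unramified quadratic extension, and which generate ramified extensions. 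The statement that unramifiedness is detected mod $4$ while splitting is detected mod $4\sqrt2$ reflects exactly that the unramified quadratic extension of $(\Q_1)_{\langle\sqrt2\rangle}$ is obtained by adjoining a square root of a unit whose class is nontrivial mod $\langle\sqrt2\rangle^{4}$ but that becoming a square requires the finer congruence mod $\langle\sqrt2\rangle^{5}$. Concretely, I expect $\alpha\equiv3+2\sqrt2\Mod4$ to land in the same square class as the norm $3+2\sqrt2=(1+\sqrt2)^2$ up to the unramified unit, which is why both $1$ and $3+2\sqrt2$ appear.

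An efficient alternative, and probably the route the authors take, is to argue via the Hilbert (norm-residue) symbol $\left(\dfrac{\sqrt2,\alpha}{\langle\sqrt2\rangle}\right)$ together with the discriminant of $\Q_1(\sqrt\alpha)/\Q_1$: unramifiedness at $\langle\sqrt2\rangle$ is equivalent to $\alpha$ being a square times a unit congruent to $1$ or $3+2\sqrt2$ mod $4$ (so that the local different is trivial), and total splitting adds the condition that the local extension is itself trivial, detected by the sharper congruence mod $4\sqrt2$. I would set this up by reducing to the two explicit unit representatives and checking the four-term list of classes $\{1,\,3,\,1+2\sqrt2,\,3+2\sqrt2\}\Mod4$, showing only $1$ and $3+2\sqrt2$ give unramified local extensions, and then refining mod $4\sqrt2$ to separate the split case.

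The main obstacle I anticipate is purely the dyadic bookkeeping: at the prime $2$ the squaring map on local units is badly non-surjective at low levels, so one must track the unit filtration carefully and be precise about the exact power of $\sqrt2$ modulo which squares are distinguished. Stating the congruences mod $4$ versus mod $4\sqrt2$ correctly—rather than off by a factor of $\sqrt2$—is where the argument is delicate, and I would verify the cutoff by directly squaring the generators of each graded piece $U^{(i)}/U^{(i+1)}$ rather than quoting a general formula.
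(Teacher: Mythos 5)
This lemma is not proved in the paper at all: it is imported verbatim as \cite[Lemma 5]{Mizusawa ProcAMS}, so there is no internal proof to compare against, and the right benchmark is Mizusawa's original argument, which is likewise a dyadic congruence computation at the completion of $\Q_1$ at $\langle\sqrt{2}\rangle$. Your proposal is essentially that standard argument and it is sound: the completion is $\Q_2(\sqrt{2})$ with uniformizer $\pi=\sqrt{2}$ and absolute ramification index $e=2$, the classical criterion says that for a unit $\alpha$ the local extension is unramified iff $x^2\equiv\alpha\Mod{\pi^{2e}}$ is solvable and trivial (split) iff $x^2\equiv\alpha\Mod{\pi^{2e+1}}$ is solvable, and since $\pi^4=4$, $\pi^5=4\sqrt{2}$ these are exactly the moduli $4$ and $4\sqrt{2}$ in the statement; the direct computation $(a+b\sqrt{2})^2=a^2+2b^2+2ab\sqrt{2}$ with $a$ odd shows the unit squares fall in the classes $1$ (for $b$ even) and $3+2\sqrt{2}$ (for $b$ odd) both mod $4$ and mod $4\sqrt{2}$, while $U^{(5)}\subseteq U^2$ (your $2e+1$ cutoff) gives sufficiency in part (2). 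Two small points to tighten when writing this up: the group of unit classes mod $4$ has eight elements, not the four you list --- the classes $\pm1\pm\sqrt{2}$, $\pm1\pm 3\sqrt{2}$ (odd coefficient of $\sqrt{2}$, e.g.\ the fundamental unit $1+\sqrt{2}$) must also be checked, though your criterion disposes of them automatically since they are not congruent to a square mod $4$; and your aside that $3+2\sqrt{2}$ is "the norm" should just say it is the global square $(1+\sqrt{2})^2$, which is why it represents the nontrivial square class in both congruence lists. Your Hilbert-symbol alternative would also work but is heavier than needed; the filtration computation you outline is already complete once the eight classes are enumerated.
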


\begin{lemma}\cite[Lemma 1]{nishino}\label{quartic condn for unr outside p1}
Let $p \equiv 9\Mod{16}$ be a prime with a factorization $p = p_1p_2$ in $\Q_1$ where $p_1$ and $p_2$ are totally positive, prime elements in $\Q_1$. Then, there exists a quadratic extension of $\Q_1$, unramified outside $p_i$ if and only if $\left( \dfrac{2}{p} \right)_{4} \neq 1\Mod{p}$.   
\end{lemma}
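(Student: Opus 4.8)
The plan is to classify, by Kummer theory over $\Q_1 = \Q(\sqrt 2)$, all quadratic extensions unramified outside $\p_i = \langle p_i\rangle$, and then detect their existence through the Hilbert symbol at $\langle\sqrt 2\rangle$, which the product formula links to the quartic symbol. Every quadratic extension of $\Q_1$ is $\Q_1(\sqrt\beta)$ with $\beta \in \Q_1^{\times}/(\Q_1^{\times})^2$. As $\Q_1$ is a principal ideal domain with $E(\Q_1) = \langle -1\rangle\times\langle\eps\rangle$ where $\eps = 1 + \sqrt 2$, and since unramifiedness outside $\p_i$ forces the valuation of $\beta$ to be even at every finite prime other than $\p_i$ (an odd valuation at $\langle\sqrt 2\rangle$ would already ramify that prime), modulo squares $\beta$ lies in the group generated by $-1$, $\eps$, and $p_i$. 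This leaves the eight classes $1,\ -1,\ \eps,\ -\eps,\ p_i,\ -p_i,\ \eps p_i,\ -\eps p_i$.

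First I would remove most of these using the archimedean places, understanding ``unramified outside $p_i$'' to include the real places. Unramifiedness there forces $\beta$ to be totally positive; since $N_{\Q_1/\Q}(\eps) = -1$, inspecting signs in the two embeddings shows that the only totally positive classes among the eight are $1$ and $p_i$ (here I use that $p_i \gg 0$). Thus the sole candidate for a nontrivial such extension is $\Q_1(\sqrt{p_i})$, and the whole question becomes whether $\Q_1(\sqrt{p_i})/\Q_1$ is unramified at $\langle\sqrt 2\rangle$. By Lemma \ref{cong condn for unram 2}(1) this holds precisely when $p_i \equiv 1$ or $3 + 2\sqrt 2 \Mod 4$. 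Moreover, testing the four admissible residues $p_i \equiv \pm 3,\ \pm(1 + 2\sqrt 2) \Mod{4\sqrt 2}$ against the splitting criterion of Lemma \ref{cong condn for unram 2}(2) shows that $\langle\sqrt 2\rangle$ is never totally split in $\Q_1(\sqrt{p_i})$; hence ``unramified at $\langle\sqrt 2\rangle$'' is the same as ``inert at $\langle\sqrt 2\rangle$'', and the same residue check records that $p_i \equiv 3$ or $1 + 2\sqrt 2$ gives ramification while $p_i \equiv -3$ or $-(1 + 2\sqrt 2)$ gives inertia.

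The main step evaluates everything through the product formula $\prod_v \left(\dfrac{\sqrt 2,\, p_i}{v}\right) = 1$ over the places $v$ of $\Q_1$. At an odd prime $v \neq \p_i$ both entries are units, so the tame symbol is trivial; at both real places it is trivial because $p_i \gg 0$; and at $\p_i$, where $p_i$ is a uniformizer and $\sqrt 2$ reduces to a square root $s$ of $2$ in the residue field $\mathbb{F}_p$, the tame symbol equals $\left(\dfrac{s}{p}\right) = s^{(p-1)/2} = 2^{(p-1)/4} = \left(\dfrac 2p\right)_4$. Hence $\left(\dfrac{\sqrt 2,\, p_i}{\langle\sqrt 2\rangle}\right) = \left(\dfrac 2p\right)_4$. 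Running the same formula for $\left(\dfrac{\sqrt 2,\, -1}{v}\right)$, the only nontrivial local factor occurs at the real embedding sending $\sqrt 2 \mapsto -\sqrt 2$, where $\sqrt 2$ and $-1$ are both negative, giving $\left(\dfrac{\sqrt 2,\, -1}{\langle\sqrt 2\rangle}\right) = -1$. Now if $\Q_1(\sqrt{p_i})$ is inert at $\langle\sqrt 2\rangle$, the local norm group is the set of elements of even valuation, which excludes the uniformizer $\sqrt 2$, so $\left(\dfrac{\sqrt 2,\, p_i}{\langle\sqrt 2\rangle}\right) = -1$; applying this to the inert member of each sign-pair $\{c, -c\}$ and using bimultiplicativity together with $\left(\dfrac{\sqrt 2,\, -1}{\langle\sqrt 2\rangle}\right) = -1$ forces the ramified member to have symbol $+1$. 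Therefore ``$p_i$ inert at $\langle\sqrt 2\rangle$'' is equivalent to $\left(\dfrac{\sqrt 2,\, p_i}{\langle\sqrt 2\rangle}\right) = -1$, i.e. to $\left(\dfrac 2p\right)_4 = -1$, and combining with the reduction of the previous paragraph yields that a quadratic extension of $\Q_1$ unramified outside $p_i$ exists if and only if $\left(\dfrac 2p\right)_4 \neq 1 \Mod p$.

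I expect the delicate point to be the wild prime $\langle\sqrt 2\rangle$: one must justify that an odd valuation there forces ramification, invoke the precise mod-$4$ and mod-$4\sqrt 2$ criteria of Lemma \ref{cong condn for unram 2} to separate the unramified, inert and split behaviours, and correctly translate ``inert'' into the Hilbert-symbol value $-1$ via the local norm group. The remaining bookkeeping, namely that total positivity of $p_i$ both trivialises the archimedean symbols and eliminates all but one Kummer class, and that the auxiliary symbol $\left(\dfrac{\sqrt 2,\, -1}{\langle\sqrt 2\rangle}\right) = -1$ pins down the ramified classes, is elementary once the local picture at $\langle\sqrt 2\rangle$ is in place.
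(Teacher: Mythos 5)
The paper offers no proof of this lemma: it is imported verbatim from Nishino \cite{nishino}, so there is nothing internal to compare against. Your argument is correct and self-contained, and all the delicate points check out: the Kummer reduction to the eight classes $\pm 1, \pm\eps, \pm p_i, \pm\eps p_i$ is valid since $\Q_1$ has class number one and odd valuation at $\langle\sqrt{2}\rangle$ forces ramification there; the residues $\pm 3, \pm(1+2\sqrt{2}) \Mod{4\sqrt{2}}$ do sort into ramified ($3,\ 1+2\sqrt{2}$) versus inert ($-3,\ -(1+2\sqrt{2})$) under Lemma \ref{cong condn for unram 2}, with the split case never occurring; the tame symbol at $\p_i$ is indeed $2^{(p-1)/4} = \left(\dfrac{2}{p}\right)_4$ (well defined since $p\equiv 1 \Mod 4$); and the sign-pair trick with $\left(\dfrac{\sqrt{2},-1}{\langle\sqrt{2}\rangle}\right)=-1$ is exactly what the converse direction needs, since a ramified local quadratic extension does not a priori exclude $\sqrt{2}$ from its norm group. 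One point deserves emphasis: your reading of ``unramified outside $p_i$'' as including the real places is not optional but forced --- otherwise $\Q_1(\sqrt{-p_i})$ would be unramified at every finite prime outside $\p_i$ precisely when $\left(\dfrac{2}{p}\right)_4 = +1$, falsifying the ``only if'' direction --- and it matches how the paper uses the lemma in Proposition \ref{2 is inert in Q1(p1)} (``it has to be a real extension''). In fact your proof yields more than the citation: the residue analysis establishes inertness of $\langle\sqrt{2}\rangle$ in $\Q_1(\sqrt{p_i})$ along the way, so it proves Lemma \ref{quartic condn for unr outside p1} and Proposition \ref{2 is inert in Q1(p1)} simultaneously, whereas the paper derives the latter from the former.
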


\begin{propn}\label{2 is inert in Q1(p1)}
   Let $p \equiv 9 \Mod{16}$ be a prime number with a factorization $p = p_1p_2$ in $\Q_1$ where $p_i, i=1,2$ are totally positive, prime elements in $\Q_1$. If $\left(\dfrac{2}{p} \right)_{4} = -1$, then the prime ideal $\langle \sqrt{2}\rangle$ is inert in $\Q_1(\sqrt{p_i})/\Q_1$. 
\end{propn}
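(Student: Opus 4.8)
The plan is to use that $\Q_1(\sqrt{p_i})/\Q_1$ is a quadratic extension, so the prime $\langle \sqrt{2}\rangle$ is inert in it precisely when it is simultaneously \emph{unramified} and \emph{not totally split}. I would establish these two properties in turn, drawing on Lemma \ref{cong condn for unram 2}, Lemma \ref{quartic condn for unr outside p1}, and the residue information $p_i \equiv \pm 3,\ \pm(1+2\sqrt{2}) \pmod{4\sqrt{2}}$ recorded above from Nishino.

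For the unramifiedness I would invoke Lemma \ref{quartic condn for unr outside p1}: since $\left(\dfrac{2}{p}\right)_4 = -1 \neq 1$, there exists a quadratic extension of $\Q_1$ unramified outside $p_i$. The key step is to identify this extension as $\Q_1(\sqrt{p_i})$. Any quadratic extension of $\Q_1$ is of the form $\Q_1(\sqrt{\beta})$, and being unramified outside $p_i$ forces $\beta$, up to squares, to be $p_i$ times a unit: the extension must actually ramify at $p_i$, because $\Q_1$ admits no nontrivial everywhere-unramified extension (its class number is $1$ and its fundamental unit $1+\sqrt{2}$ has norm $-1$, so even the narrow class number is $1$). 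Writing the units modulo squares as $\{1,-1,\varepsilon,-\varepsilon\}$ with $\varepsilon = 1+\sqrt{2}$, the requirement that the extension be unramified at both archimedean places (which is subsumed in "unramified outside $p_i$") singles out the totally positive representative $\beta = p_i$: indeed $-p_i$ is totally negative, while $\pm\varepsilon p_i$ change sign under conjugation since $\varepsilon^{\sigma} = 1-\sqrt{2} < 0$. Hence $\Q_1(\sqrt{p_i})$ is the extension unramified outside $p_i$, and in particular $\langle \sqrt{2}\rangle$ is unramified in it.

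For the non-split part I would feed this back into Lemma \ref{cong condn for unram 2}. Unramifiedness at $\langle \sqrt{2}\rangle$ gives, by part (1), $p_i \equiv 1$ or $3+2\sqrt{2} \pmod{4}$. Reducing the four admissible residues modulo $4$, one sees that $3 \equiv 3$ and $1+2\sqrt{2} \equiv 1+2\sqrt{2}$ fail this condition, whereas $-3 \equiv 1$ and $-(1+2\sqrt{2}) \equiv 3+2\sqrt{2}$ meet it; thus $p_i \equiv -3$ or $-(1+2\sqrt{2}) \pmod{4\sqrt{2}}$. A direct divisibility check in $\Z[\sqrt{2}]$ then shows that neither $-3$ nor $-(1+2\sqrt{2})$ is congruent to $1$ or to $3+2\sqrt{2}$ modulo $4\sqrt{2}$, so part (2) of Lemma \ref{cong condn for unram 2} shows $\langle \sqrt{2}\rangle$ is not totally split. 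Being unramified and not split in a quadratic extension, it is inert, as claimed; the argument applies verbatim to both $i=1,2$.

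I expect the main obstacle to be the identification of the abstractly-guaranteed extension from Lemma \ref{quartic condn for unr outside p1} with the concrete field $\Q_1(\sqrt{p_i})$: one must argue that total positivity of $p_i$ (equivalently, unramifiedness at the archimedean places) pins down the unit factor uniquely, and one should note that the alternative interpretation allowing archimedean ramification would make such an extension exist for every $p_i$, contradicting the sharpness of Nishino's criterion. Once this identification is secured, the remaining work — carefully distinguishing the modulus $4$ from $4\sqrt{2}$ in the two parts of Lemma \ref{cong condn for unram 2} — is routine congruence bookkeeping.
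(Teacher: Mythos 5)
Your proposal is correct and follows essentially the same route as the paper's own proof: invoke Lemma \ref{quartic condn for unr outside p1} to produce a quadratic extension unramified outside $p_i$, identify it with $\Q_1(\sqrt{p_i})$, then combine part (1) of Lemma \ref{cong condn for unram 2} with Nishino's residues $\pm 3, \pm(1+2\sqrt{2}) \Mod{4\sqrt{2}}$ to pin down $p_i \equiv -3$ or $-(1+2\sqrt{2}) \Mod{4\sqrt{2}}$ and conclude via part (2) that $\langle \sqrt{2}\rangle$ is unramified but not split, hence inert. Your only deviation is a welcome one: you spell out the identification step (ruling out the unit factors $-1, \pm(1+\sqrt{2})$ via total positivity and the norm $-1$ of the fundamental unit) in more detail than the paper, which compresses it into the remarks that the extension is real and that $\Q_1$ has class number $1$.
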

\begin{proof}
    We furnish a proof for $p_1$ as the same holds for $p_2$. As $\left(\dfrac{2}{p} \right)_{4} = -1$, $\Q_1$ has a quadratic extension unramified outside $p_1$ (from Lemma \ref{quartic condn for unr outside p1}). Let that extension be $\Q_1(\sqrt{\alpha})$ for some non-square $\alpha \in \Z[\sqrt{2}]$. As the extension is unramified outside $p_1$, it must be unramified at $\langle \sqrt{2} \rangle$, and it has to be a real extension. Also, since the class number of $\Q_1$ is equal to $1$, any non-trivial extension of $\Q_1$ has to be ramified at some prime. Putting these together, we obtain that $\Q_1(\sqrt{\alpha}) = \Q_1(\sqrt{p_1})$, along with $p_1 \equiv 1 \text{ or } 3 +2\sqrt{2} \Mod{4}$ (from Lemma \ref{cong condn for unram 2}, part 1). We already have that $p_1 \equiv \pm 3 \text{ or } \pm(1+2\sqrt{2}) \Mod{4\sqrt{2}}$. If $p_1 \equiv 3$ or $(1+2\sqrt{2}) \Mod{4\sqrt{2}}$, then $p_1 \equiv 3$ or $(1+2\sqrt{2}) \Mod{4}$, which is not possible. Therefore, $p_1 \equiv -3$ or $-(1+2\sqrt{2}) \Mod{4\sqrt{2}}$, which is not congruent to any of $1$ or $3 + 2\sqrt{2} \Mod{4\sqrt{2}}$. Thus, $\langle \sqrt{2} \rangle$ is inert in the extension $\Q_1(\sqrt{p_1})/\Q_1$.
\end{proof}

\subsection*{Proof of Theorem \ref{X' is bdd}}
Let $k$ be a real quadratic field satisfying Condition \ref{Cond 1}. Let $p = p_1p_2$ be a factorization of $p$ in $\Q_1$ as given in Proposition \ref{2 is inert in Q1(p1)}. Then, $p_2\equiv -3$ or $-(1+2\sqrt{2}) \Mod{4\sqrt{2}}$. Since $qr \equiv 1 \Mod{8}$, we obtain $qr \equiv 1\Mod{4\sqrt{2}}$, $p_2qr \equiv 1 \text{ or } 3+2\sqrt{2} \Mod{4}$ and $p_2qr \not\equiv 1 \text{ or } 3+2\sqrt{2} \Mod{4\sqrt{2}}$. Therefore, $\langle \sqrt{2} \rangle$ is inert in $\Q_1(\sqrt{p_2qr})/\Q_1$ from Lemma \ref{cong condn for unram 2}, part (2). Consider the fields $k_1(\sqrt{p_1})$ and $k_1(\sqrt{p})$. We now claim that these are unramified over $k_1$.
\begin{center}
\begin{figure}[hbt!] 
 \begin{tikzpicture}

    \node (Q1) at (-4,0) {$\Q_1$};
    \node (Q2) at (-1,2) {$\Q_1\left( \sqrt{p_1} \right)$};
    \node (Q3) at (-4,2) {$k_1$};
    \node (Q4) at (-7,2) {$\Q_1\left( \sqrt{p_2qr} \right)$};  
    \node (Q5) at (-4,4) {$k_1\left( \sqrt{p_1} \right)$};
    
    \draw (Q1)--(Q2);
    \draw (Q1)--(Q3); 
    \draw (Q1)--(Q4);
    \draw (Q2)--(Q5);
    \draw (Q3)--(Q5);
    \draw (Q4)--(Q5);

    \node (P1) at (4,0) {$\Q_1$};
    \node (P2) at (1,2) {$\Q_1\left( \sqrt{qr} \right)$};
    \node (P3) at (4,2) {$k_1$};
    \node (P4) at (7,2) {$\Q_1\left( \sqrt{p}\right)$};  
    \node (P5) at (4,4) {$k_1\left( \sqrt{p} \right)$};
    
    \draw (P1)--(P2);
    \draw (P1)--(P3); 
    \draw (P1)--(P4);
    \draw (P2)--(P5);
    \draw (P3)--(P5);
    \draw (P4)--(P5); 
   
     \end{tikzpicture}
    \end{figure}
\end{center}

Let $\p_1, \p_2, \q$ and $\mathfrak{r}$ be the prime ideals above $p, q$ and $r$ in $\Q_1$. Suppose $\ell_{11}$ and $\ell_{12}$ denote the primes above $\langle \sqrt{2} \rangle$ in $k_1$. The primes $\p_2, \q$ and $\mathfrak{r}$ are unramified in $\Q_1(\sqrt{p_1})/\Q_1$, but ramified in $k_1/\Q_1$ and $\Q_1(\sqrt{p_2qr})/\Q_1$. Thus, the primes above $p_2, q$, and $r$ in $k_1$ are unramified in $k_1(\sqrt{p_1})$. The ideal $\p_1$ is ramified in $\Q_1(\sqrt{p_1})/\Q_1$ and $k_1/\Q_1$, but unramified in $\Q_1(\sqrt{p_2qr})/\Q_1$. Therefore, the prime above $\p_1$ in $k_1$ is unramified in $k_1(\sqrt{p_1})$. The ideal $\langle \sqrt{2} \rangle$ splits in $k_1/\Q_1$, and remains inert in $\Q_1(\sqrt{p_1})$ and $\Q_1(\sqrt{p_2qr})$. Therefore, the primes $\ell_{11}$ and $\ell_{12}$ are inert in $k_1(\sqrt{p_1})/k_1$. Similarly, $k_1(\sqrt{p})/k_1$ is an unramified extension. But, $p,qr\equiv 1 \Mod{8}$ imply that  $\ell_{11}$ and $\ell_{12}$ are totally split in $k_1(\sqrt{p})/k_1$. Thus, our claim stands true. 

Moreover, $k_1(\sqrt{p_1}, \sqrt{p})/k_1$ is an unramified extension with Galois group isomorphic to $\Z/2\Z \oplus \Z/2\Z$. From Corollary \ref{A(k1)=(2,2)}, it is clear that $k_1(\sqrt{p_1}, \sqrt{p})$ must be the 2-Hilbert class field of $k_1$. In this extension, the primes $\ell_{11}$ and $\ell_{12}$ are not totally split, and the largest subfield where they are split is $k_1(\sqrt{p})$. Hence, $A^{\prime}(k_1)$ is isomorphic to $\Z/2\Z$. From Remark  \ref{A'(k0) =2}, $\#A^{\prime}(k_0) = \#A^{\prime}(k_1) = 2$. The extension $k_n/k$ is totally ramified at all primes above $2$ for each $n \geq 1$. Appealing to part (1) of Theorem \ref{A' stability}, $\#A^{\prime}(k_n) = 2$ for all $n\geq 0$. Therefore, $X^{\prime}(k_{\infty})$ is isomorphic to $\Z/2\Z$. 
$\hfill\Box$

\subsection*{ Proof of Corollary \ref{lambda = 0}} From Theorem \ref{X' is bdd}, we note that for each $n \geq 0$, $A(k_n)/D(k_n) \cong \Z/2\Z$. It is evident that the order of $A(k_n)$ is bounded if the order of $D(k_n)$ is bounded as $n$ tends to infinity. For each $n$, $k_n/k$ is a cyclic extension of degree $2^n$. Let $\tau_n$ be the generator of the Galois group of $k_n/k$ induced by the topological generator $\gamma$ of $k_{\infty}/k$. As the prime above $2$ splits in $k_n/\Q_n$ for all $n$, there exist prime ideals $\ell_{n1}$ and $\ell_{n2}$ in $k_n$ lying above 2, and above the ideals $\ell_{01}$ and $\ell_{02}$ of $k$ in particular. As $k_n/k$ is totally ramified at $\ell_{01}$ and $\ell_{02}$, for any $\tau$ in Gal($k_n/k$), $\ell_{n1}^{\tau} = \ell_{n1}$ and $\ell_{n2}^{\tau} = \ell_{n2}$. This holds for $\tau_n$ as well, and thus, $[\ell_{n1}]^{\tau_n} = [\ell_{n1}]$ and $[\ell_{n2}]^{\tau_n} = [\ell_{n2}]$. Therefore, $D(k_n) = \langle [\ell_{n1}], [\ell_{n2}] \rangle$ must be contained in $B(k_n)$ (as defined before Proposition \ref{Bn bdd}). From Proposition \ref{Bn bdd}, the order of $B(k_n)$ is bounded, and hence the order of $D(k_n)$ must be bounded as $n$ tends to infinity. Consequently, the order of $A(k_n)$ must stabilise for sufficiently large $n$ and the Iwasawa invariant $\lambda$ must be equal to $0$.

\subsection*{Proof of Corollary \ref{4-rank of A(kn)}} For each $n$, $A(k_n), D(k_n)$ and $A^{\prime}(k_n)$ are modules over $\Z/2\Z$, and we have the exact sequence
$$1 \longrightarrow D(k_n) \longrightarrow A(k_n) \longrightarrow A^{\prime}(k_n) \longrightarrow 1.$$

As $\Z/2\Z$ is an integral domain, $ {\rm{rank}_2}A(k_n) = {\rm{rank}_2}D(k_n) + {\rm{rank}_2}A^{\prime}(k_n)$. Since $X^{\prime}(k_{\infty}) \cong \Z/2\Z$, $A^{\prime}(k_n)$ has rank $1$ for all $n \geq 0$. For all $n \geq 1$,  $ {\rm{rank}_2}A(k_n) = 2$ by Lemma \ref{rank A(Kn)=2}. Combining these yields ${\rm{rank}_2}D(k_n) = 1$ for all $n \geq 1$. For $n = 1$ in particular, we have already shown that $A(k_1)$ is isomorphic to $\Z/2\Z \oplus \Z/2\Z$ and $A^{\prime}(k_1)$ is isomorphic to $\Z/2\Z$. Hence, $D(k_1)$ is cyclic, and of order $2$.

For each $n \geq 1$, there exist integers $a_n, b_n$ and $c_n$, all at least equal to $1$ such that $A(k_n)$ and $D(k_n)$ are isomorphic to $\Z/2^{a_n}\Z \oplus \Z/2^{b_n}\Z$ and $\Z/2^{c_n}\Z$ respectively. Without loss of generality, suppose $c_n \leq a_n$. Then, by Theorem \ref{X' is bdd}, $$\Z/2\Z \cong A^{\prime}(k_n) = A(k_n)/D(k_n) \cong \Z/2^{a_n-c_n}\Z\oplus \Z/2^{b_n}\Z.$$

As there is a drop in rank of the quotient by $1$, $a_n = c_n$, and $b_n = 1$ for all $n \geq 1$. It is thus implied that $A(k_n) \cong \Z/2\Z \oplus \Z/2^{a_n}\Z$ for all $n \geq 1$. The subgroup $D(k_n)$ is the largest cyclic subgroup of $A(k_n)$. As the Iwasawa invariant $\lambda$ is equal to $0$, there must exist $n_0 \geq 1$ such that $a_n = a_{n_0}$ for all $n \geq n_0$.

The ideals $\ell_{01}$ and $\ell_{02}$ in $k$ that divide $2$ are principal (see Remark 4.2). Hence, for each $n \geq 1$, $\ell_{n1}^{2^n}$ and $\ell_{n2}^{2^n}$ are principal. Also, since the prime ideal above $2$ in $\Q_n$ is principal, as classes, $[\ell_{n1}]^{-1} = [\ell_{n2}]$ in $A(k_n)$. Therefore, $D(k_n) = \langle [\ell_{n1}] \rangle$ must be of order at most $2^n$, and we conclude that $a_n \leq n$. The $4$-rank of $A(k_n)$ is equal to the $2$-rank of $2A(k_n)/4A(k_n)$. It is apparent from the structure of $A(k_n)$ that its $4$-rank is at most equal to $1$ for all $n \geq 0$. Combining all, we can conclude that $X(k_{\infty})$ is isomorphic to $\Z/2\Z \oplus \Z/2^{a_{n_0}}\Z$. This completes the proof of the corollary. $\hfill\Box$

\smallskip

{\bf Acknowledgements.} The authors take immense pleasure to thank Indian Institute of Technology Guwahati for providing excellent facilities to carry out this research. The research of the second author is partially funded by the MATRICS, SERB research grant MTR/2020/000467.

\end{document}